\DeclareMathOperator{\area}{area}
\DeclareMathOperator{\bounce}{bounce}
\DeclareMathOperator{\dinv}{dinv}
\DeclareMathOperator{\Hilb}{Hilb}
\DeclareMathOperator{\Sing}{Sing}
\DeclareMathOperator{\mult}{mult}
\DeclareMathOperator{\Mod}{Mod}
\DeclareMathOperator{\Id}{Id}
\newcommand{\jc}{\overline{JC}}
\newcommand{\ophi}{\overline{\varphi_1}}
\newcommand{\gpq}{\Gamma^{p,q}}
\newcommand{\gnn}{\Gamma^{n,n+1}}
\newcommand{\hpq}{h^{+}_{\frac{p}{q}}}
\newcommand{\hnn}{h^{+}_{\frac{n}{n+1}}}
\newcommand{\rpq}{R_{+}^{p,q}}
\newcommand{\rnn}{R_{+}^{n,n+1}}
\newcommand{\ZZ}{\mathbb{Z}_{\ge 0}}
\newtheorem{lemma}{Lemma}[section]
\newtheorem{example}[lemma]{Example}
\newtheorem{theorem}[lemma]{Theorem}
\newtheorem{corollary}[lemma]{Corollary} 
\newtheorem{conjecture}[lemma]{Conjecture} 
\theoremstyle{definition}
\newtheorem{definition}[lemma]{Definition}
\newtheorem{remark}[lemma]{Remark}
\title{Compactified Jacobians and $q,t$-Catalan Numbers, I}
\author{Evgeny Gorsky\footnote{Department of Mathematics, SUNY at Stony Brook, Stony Brook, NY 11794-3651} \footnote{Laboratoire J.-V. Poncelet (UMI 2615). 119002, Bolshoy Vlasyevskiy Pereulok 11, Moscow.}, 
Mikhail Mazin\footnote{IMS, SUNY at Stony Brook, Stony Brook, NY 11794-3660}}
\date{}
\begin{document}
\maketitle 
 
\begin{abstract}
J. Piontkowski described the homology of the Jacobi factor of a plane curve singularity with one
Puiseux pair. We discuss the combinatorial structure of his answer, in particular, relate it to
the bigraded deformation of Catalan numbers introduced by A. Garsia and M. Haiman. 

\end{abstract}


\section{Introduction}

The compactified Jacobians of singular curves were introduced by C. Rego in \cite{rego}. Some of their general properties were established by A. Altman, S. Kleiman and A. Iarrobino in \cite{aik} and \cite{kleiman}. In particular, the compactified Jacobian is irreducible if and only if all curve singularities have embedding dimension 2.
For a rational unibranched curve $C$ its compactified Jacobian is homeomorphic to the direct product of compact spaces, the Jacobi factors
 $\jc_p$, $p\in \Sing(C)$, which depend only on the analytic type of the singularities of $C$ \cite{beauville}. Following the ideas of A. Beauville, S. T. Yau and E. Zaslow (\cite{beauville}, \cite{yz}), L. Goettsche, B. Fantechi and D. van Straten showed in \cite{fgs} the significance of Jacobi factors in enumerative geometry: they related the Euler characteristic of the Jacobi factor $\jc_p$ to the multiplicity of the $\delta$-constant strata in the base of the miniversal deformation of $(C,p)$. This relation was generalized by V. Shende in \cite{shende} where he proved analogous relations between Hilbert schemes of points on a singular curve and multiplicities of some strata in the base of the miniversal deformation.

J. Piontkowski in \cite{piont2} (see also \cite{piont}) proved that in some cases the Jacobi factors can be decomposed into explicit affine cells. In particular, for a plane curve singularity with one Puiseux pair $(p,q)$ he proved that these cells can be described as follows.

\begin{definition}
Let $\gpq:=\{ap+bq :  a,b \in\mathbb Z_{\ge 0}\}$ be the semigroup generated by $p$ and $q.$ A subset $\Delta\subset\ZZ$ is called a $0$-normalized $\gpq$--semi-module if $0\in\Delta$ and $\Delta+\gpq\subset\Delta.$
\end{definition}


The cells of the Jacobi factor are parametrised by all possible $0$--normalized $\gpq$--semi-modules $\Delta,$ and the dimension of the cell $C_{\Delta},$ corresponding to a semi-module $\Delta,$  is a combinatorial invariant of $\Delta.$  This gives a complete description of the homology of the Jacobi factor in this case.

We give an alternative combinatorial description of Piontkowski's answer.

\begin{definition}(\cite{lowa})\label{defhplus}
Let $D$ be a Young diagram, $c\in D$. Let $a(c)$ and $l(c)$ denote the lengths of arm and leg for $c$.
For each real nonnegative $x$  define
$$h^{+}_{x}(D)=~\vline \left\lbrace c\in D:\frac{a(c)}{l(c)+1}\le x< \frac{a(c)+1}{l(c)}\right\rbrace \vline .$$
\end{definition}

\begin{remark}
If $l(c)=0,$ we say that $\frac{a(c)+1}{l(c)}=\infty>x.$
\end{remark}

Let $R^{p,q}$ be a $(p,q)$-rectangle, where $(p,q)$ are coprime. Let $\rpq\subset R^{p,q}$ be the subset
consisting of boxes which lie below the left-top to right-bottom diagonal.

We construct a natural bijection $D$ between the set of  $\gpq$--semi-modules and the set of Young diagrams contained in $\rpq$, such that
\begin{equation}
\label{T1}
|D(\Delta)|=|\Delta\setminus\gpq|,\quad \dim C_{\Delta}=\frac{(p-1)(q-1)}{2}-\hpq(D(\Delta)).
\end{equation}

In \cite{gaha} A. Garsia and M. Haiman constructed a sequence of bivariate generalizations of the Catalan numbers $C_{n}(q,t)$.
The combinatorial description of these polynomials was obtained by A. Garsia and J. Haglund in \cite{gahaim}.
Equation (\ref{T1}) together with the results of \cite{gahaim} allows us to prove the following equation:

\begin{equation}
\sum_{\Delta\in \Mod_{\gnn}}q^{\dim(C_{\Delta})}t^{|\ZZ\setminus \Delta|}=q^{\binom{n}{2}}C_{n}(q^{-1},t).
\end{equation}
\noindent where $\Mod_{\gnn}$ is the set of $0$-normalized $\gnn$--semi-modules.

Therefore, one obtains the following formula for the Poincar\'e polynomial of the Jacobi factor of the singularity with one Puiseux pair $(n,n+1):$
\begin{equation}
\label{Pnnp1}
P_{n,n+1}(t)=t^{2\binom{n}{2}}C_{n}(t^{-2},1)=\sum_{D\subset \rnn}t^{2|D|}.
\end{equation}

 We also construct a map $G$ from the set of diagrams inscribed in $\rpq$ to itself, such that for all $\Delta$
$$|G(D(\Delta))|=\dim C_{\Delta}.$$

We conjecture that the map $G$ is a bijection from the set of subdiagrams of $\rpq$ to itself. In particular, it would imply
a direct generalization of the formula (\ref{Pnnp1}) to the case of the Jacobi factor of a singularity with one Puiseux pair $(p,q)$ for arbitrary coprime $(p,q):$
\begin{equation}
P_{p,q}(t)=\sum_{D\subset \rpq}t^{2\dim\Delta(D)}=\sum_{D\subset \rpq}t^{2|G(D)|}=\sum_{D\subset \rpq}t^{2|D|}
\end{equation}

We prove this conjecture for the $(n,n+1)$ case in Theorem \ref{nnp1}. It turns out that in this case the map $G$ gives an alternative description
of the combinatorial bijection considered by J. Haglund \cite{haglund}. 

Recently A. Buryak \cite{buryak} provided an example of a smooth subvariety of the Hilbert scheme of points in $\mathbb{C}^2$ which admits a decomposition into affine cells enumerated by Dyck paths, such that the dimensions of these cells are related to the $h^{+}$ statistic. In  Section \ref{Mpqh}, we construct an explicit bijection between the cell decompositions from \cite{piont2}
and \cite{buryak}.

In Section \ref{sec:dimension} we construct a bijection
between Piontkowski's cells and Young diagrams inscribed in a right
$(p,q)$-triangle, and prove a formula for the dimensions of cells in
terms of the diagrams. In Section \ref{sec:poly} we explore a relation between
our combinatorial description of  the Piontkowski's cell decomposition
and the $q,t$-Catalan numbers and their generalizations. In Section
\ref{Mpqh} we relate  Piontkowski's cell decomposition to a cell
decomposition of an open subvariety in the Hilbert scheme of points in
$\mathbb{C}^2,$ introduced in \cite{buryak}. Finally, in Section
\ref{sec:knots} we mention a possible relation of our results to HOMFLY
homology of torus knots.

We continue our study of combinatorics of Jacobi factors in \cite{GM}.

\section{Compactified Jacobians and Jacobi factors}
\label{sec:dimension}

\subsection{Preliminary Information.}

 We define the compactified Jacobian following \cite{piont2} and \cite{rego} (see also \cite{gk}, \cite{gp}).
Let $C$ be a possibly singular complete algebraic curve. 

\begin{definition}
The Jacobian $JC$ of $C$ consists of the locally free sheaves of rank 1
and degree 0 on $C$.
\end{definition}

\begin{definition}
The compactified Jacobian $\jc$ of $C$ consists of the torsion free sheaves of rank 1
and degree 0 on $C$, i.e. $\chi(F)=1-g_{a}(C)$.
\end{definition}

Here is list of useful facts about the compactified Jacobians.

\begin{theorem}(\cite{rego},\cite{aik},\cite{kleiman})
If all singularities of $C$ have embedding dimension $2$, then $\jc$ is irreducible.
If $C$ has a singularity of embedding dimension $\ge 3$, then $\jc$ is reducible.
\end{theorem}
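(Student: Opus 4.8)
The statement is classical and I would only outline the argument, following \cite{rego} and \cite{aik}. Both implications are first reduced to a local question at the finitely many singular points of $C$: by the results of \cite{aik} (and, for the rational unibranch curves that matter here, by the product decomposition $\jc\cong\prod_{p}\jc_{p}$ recalled in the introduction), the compactified Jacobian $\jc$ is irreducible if and only if the local Jacobi factor $\jc_{p}$ --- the space of suitably normalized fractional ideals of $\widehat{\mathcal O}_{C,p}$ --- is irreducible for every $p\in\Sing(C)$. So it suffices to prove that $\jc_{p}$ is irreducible precisely when $\widehat{\mathcal O}_{C,p}$ has embedding dimension $2$.

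For the first implication, assume every singularity of $C$ is planar. The plan is to realize $\jc$ as the image of an irreducible scheme. Twisting by a line bundle of degree $N$ (which exists since $C$ is integral with a smooth point) gives $\jc=\overline{J}^{\,0}(C)\cong\overline{J}^{\,-N}(C)$ for any $N$, and the Abel-type assignment $Z\mapsto\mathcal I_{Z}$ defines a morphism $\operatorname{Hilb}^{N}(C)\to\overline{J}^{\,-N}(C)$. This morphism is surjective for $N\gg 0$: a torsion-free rank-one sheaf $F$ of degree $-N$ with $N$ large admits a nonzero homomorphism to $\mathcal O_{C}$ (the sheaf $\mathcal{H}om_{\mathcal O_{C}}(F,\mathcal O_{C})$ having positive Euler characteristic, hence a nonzero global section), and any such homomorphism is injective with cokernel of finite length $N$, so $F\cong\mathcal I_{Z}$ for some length-$N$ subscheme $Z\subset C$. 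It then remains to invoke the key local input: the Hilbert scheme of points of a curve with only planar singularities is irreducible, which follows from the results of Fogarty and Brian\c{c}on on Hilbert schemes of points of a smooth surface together with a standard gluing argument, a planar singularity being embeddable in $\mathbb A^{2}$. Hence $\jc$, as the image of the irreducible $\operatorname{Hilb}^{N}(C)$, is irreducible. (Alternatively, following \cite{aik}, one stratifies $\jc$ by the local isomorphism type of $F$ along $\Sing(C)$, bounds each non-locally-free stratum in dimension by $g_{a}(C)-1$ using that for a planar singularity the free module is the generic point of $\jc_{p}$, and combines this with the general lower bound $\dim_{[F]}\jc\ge g_{a}(C)$ furnished by deformation theory on a curve; then no component other than $\overline{JC}$ can exist.)

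For the second implication, assume some $p\in\Sing(C)$ has embedding dimension $\ge 3$; it suffices to show $\jc_{p}$ is reducible. The free module $\widehat{\mathcal O}_{C,p}$ lies in a component of $\jc_{p}$ of dimension $\delta_{p}$, namely the closure of the locus $\widetilde{\mathcal O}^{*}_{C,p}/\widehat{\mathcal O}^{*}_{C,p}$ of normalized free fractional ideals. On the other hand, when the embedding dimension is $\ge 3$ there is a family of genuinely non-free fractional ideals --- built, say, from the maximal ideal $\mathfrak m_{p}$ and the intermediate rings $\widehat{\mathcal O}_{C,p}\subsetneq R\subseteq\widetilde{\mathcal O}_{C,p}$ --- of dimension $\ge\delta_{p}$ whose closure does not meet the free locus; a dimension estimate of the type carried out in \cite{aik} makes this precise. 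Consequently $\jc_{p}$, and therefore $\jc$, has at least two irreducible components. The main obstacle throughout is exactly this pair of local facts about Jacobi factors of curve singularities --- the irreducibility of $\operatorname{Hilb}^{N}$ (equivalently, the genericity of the free module in $\jc_{p}$) for a planar singularity, and its quantitative failure in embedding dimension $\ge 3$; granting these, the passage back to $\jc$ via the Abel morphism, respectively via the local-to-global structure of the compactified Jacobian, is routine.
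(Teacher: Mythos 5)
This theorem is quoted in the paper as background, with citations to \cite{rego}, \cite{aik}, \cite{kleiman}, and no proof is given there; so there is nothing in the paper itself to compare your argument against. What you have written is a faithful outline of the classical proofs from those references: irreducibility in the planar case via surjectivity of the Abel map $\Hilb^{N}(C)\to\overline{J}^{\,-N}(C)$ for $N\gg 0$ together with irreducibility of $\Hilb^{N}(C)$, and reducibility in embedding dimension $\ge 3$ via a dimension count on the locus of non-locally-free sheaves. The identification of the two essential local inputs is correct, and the surjectivity argument (positivity of $\chi(\mathcal{H}om(F,\mathcal O_C))$ forcing an injection $F\hookrightarrow\mathcal O_C$ with finite cokernel of length $N$) is sound, granted that $N$ can be chosen uniformly, which holds because the correction terms are bounded by $\sum_p\delta_p$.

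Two points deserve sharpening before this could be called a proof rather than an outline. First, the irreducibility of $\Hilb^{N}(C)$ for a curve with planar singularities is not a ``gluing argument'': it is a dimension estimate. One bounds each stratum with prescribed lengths $m_p$ at the singular points by $N-\#\{p: m_p>0\}$ using Brian\c{c}on's theorem that the punctual Hilbert scheme of a smooth surface is irreducible of dimension $m-1$, and combines this with the lower bound $\dim\ge N$ on every component of $\Hilb^N(C)$, which uses that $C$ is a divisor in a smooth surface. Second, in the reducibility direction, the reduction ``it suffices to show $\jc_p$ is reducible'' is only literally available in the rational unibranch setting where the product decomposition holds; for general $C$ the correct argument exhibits a closed subset of $\jc$ of dimension $\ge g_a(C)$ consisting of sheaves not locally free at $p$, which then cannot lie in the boundary $\overline{JC}\setminus JC$ of the $g_a$-dimensional component $\overline{JC}$. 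The construction of such a family and the verification that it has dimension $\ge g_a$ when the embedding dimension is $\ge 3$ is the actual content of \cite{kleiman}, and in your write-up it is asserted rather than carried out; you flag this honestly, but it is the one step where the statement could conceivably fail and so it cannot be waved at.
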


\begin{theorem}(\cite{rego})
Suppose that $C$ has only planar singularities. Then the boundary $\jc\setminus JC$
is a union of $m$ irreducible codimension one subsets of $\jc$ where
$$m=\sum_{p\in C}(\mult\mathcal{O}_{C,p}-1).$$
\end{theorem}

\begin{theorem}(\cite{beauville})
For a rational unibranched curve $C,$ its compactified Jacobian is homeomorphic to the direct product of compact spaces, the Jacobi factors
 $\jc_p$, $p\in \Sing(C)$, which depend only on the analytic type of the singularities $(C,p)$.
\end{theorem}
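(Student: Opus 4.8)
The plan is a local-to-global analysis along the normalization $\nu\colon\mathbb{P}^1\to C$. For a rational unibranched curve this map is a finite birational bijection, hence a homeomorphism, it is an isomorphism over $C\setminus\Sing(C)$, and $\nu^{-1}(p)$ is a single point for each $p$, so the local contribution at $p$ will be one Jacobi factor and not a product over branches. The guiding principle is that a torsion-free rank $1$ degree $0$ sheaf on $C$ is the same data as a line bundle on $\mathbb{P}^1$ together with a compatible choice of local module at each singular point, the line bundle carrying no moduli because $\mathrm{Pic}(\mathbb{P}^1)$ is discrete and its degree is forced by $\deg F$; this is where ``rational'' is used.

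First I would fix the conductor ideal $\mathfrak{c}\subset\mathcal{O}_C$, which is also an ideal in $\nu_*\mathcal{O}_{\mathbb{P}^1}$, so that $\nu_*\mathcal{O}_{\mathbb{P}^1}/\mathfrak{c}=\bigoplus_{p\in\Sing(C)}A_p$ with $A_p=\widetilde{\mathcal{O}}_{C,p}/\mathfrak{c}_p$ a finite-dimensional algebra built only from $\widehat{\mathcal{O}}_{C,p}$. Given a torsion-free rank $1$ sheaf $F$ on $C$, put $\widetilde F=\nu_*\bigl(\nu^*F/(\text{torsion})\bigr)$, the pushforward of some $\mathcal{O}_{\mathbb{P}^1}(d)$; then $F=\widetilde F$ away from $\Sing(C)$ and $\mathfrak{c}\widetilde F\subseteq F\subseteq\widetilde F$, so $F$ is determined by the submodule $F/\mathfrak{c}\widetilde F$ of $\widetilde F/\mathfrak{c}\widetilde F=\bigoplus_p(\widetilde F/\mathfrak{c}\widetilde F)_p$, each summand being a free rank $1$ $A_p$-module. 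Choosing local trivializations of the line bundle near $\nu^{-1}(p)$ identifies the $p$-th component, after $0$-normalizing, with a point $M_p$ of the Jacobi factor $\jc_p$. Conversely, from a tuple $(M_p)_p$ I reconstruct $F$ as the subsheaf of $\nu_*\mathcal{O}_{\mathbb{P}^1}(d)$ of sections whose germ over $p$ lies in the prescribed translate of $M_p$, where $d$ is uniquely fixed by the requirement $\deg F=0$: the relation $\chi(F)=(d+1)-\sum_p\ell(M_p)$ combined with $g_a(C)=\sum_p\delta_p$ forces $d=\sum_p\bigl(\ell(M_p)-\delta_p\bigr)$. A short check shows the two assignments are mutually inverse, giving a bijection $\jc\leftrightarrow\prod_p\jc_p$ on points.

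To upgrade this to a homeomorphism I would redo both constructions in families: using the universal torsion-free sheaf on $\jc\times C$ and the relative version of the quotient above, the map $F\mapsto F/\mathfrak{c}\widetilde F$ lives in a relative Quot scheme and the gluing $(M_p)\mapsto F$ is its inverse, both compatible with base change once $d$ is held locally constant---which it is, since the local colengths $\ell(M_p)$ are locally constant in any flat family of torsion-free sheaves of fixed degree. Hence $\jc$ and $\prod_p\jc_p$ represent the same moduli functor and are isomorphic as schemes, a fortiori homeomorphic; compactness of the product follows from the projectivity of each $\jc_p$, itself visible from its realization as a closed subscheme of a product of Grassmannians of $A_p$. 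The dependence of $\jc_p$ only on the analytic type of $(C,p)$ is then built in, since $\jc_p$ was manufactured purely from $\widehat{\mathcal{O}}_{C,p}$.

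The main obstacle I anticipate is exactly this functorial step: one must verify that forming $F/\mathfrak{c}\widetilde F$ and gluing back commute with arbitrary, possibly non-reduced, base change and preserve flatness, and that $d$ and the $\ell(M_p)$ are genuinely locally constant in families, so that the decomposition is an isomorphism of moduli spaces and not merely of point sets. Secondary technical points are the sandwiching $\mathfrak{c}\widetilde F\subseteq F$ in full generality (it uses that $\widetilde F$ is the smallest torsion-free $\mathcal{O}_C$-sheaf through which $F\hookrightarrow\nu_*\nu^*F$ factors and that $\mathfrak{c}$ annihilates the finite-length discrepancy) and checking that the $0$-normalization implicit in the definition of $\jc_p$ is precisely the one forced by $\deg F=0$.
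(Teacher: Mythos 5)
The paper offers no proof of this statement---it is quoted verbatim from \cite{beauville}---so the only fair comparison is with Beauville's original argument, and your proposal reproduces it faithfully in outline: pass to the normalization $\nu\colon\mathbb{P}^1\to C$, use the conductor to get the sandwich $\mathfrak{c}\widetilde F\subseteq F\subseteq\widetilde F$, reduce to a tuple of local $\mathcal{O}_{C,p}$-modules, use rationality to kill the continuous part of $\mathrm{Pic}$ of the normalization, and note that an isomorphism between two such subsheaves of $\nu_*\mathcal{O}_{\mathbb{P}^1}(d)$ extends to an automorphism of $\mathcal{O}_{\mathbb{P}^1}(d)$ and is therefore a scalar, so that (after fixing local trivializations) isomorphism classes really do correspond bijectively to tuples $(M_p)_p$. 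The pointwise bijection, the Euler characteristic bookkeeping for $d$, and the compactness of each $\jc_p$ via a Grassmannian embedding are all correct.

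The gap is exactly where you anticipated it, and the specific claim you lean on is false: the local colengths $\ell(M_p)$, and with them the degree $d$ of $\nu^*F/(\mathrm{torsion})$, are \emph{not} locally constant in a flat family of torsion-free sheaves of fixed degree. Already for the cuspidal cubic ($\mathcal{O}_{C,p}=\mathbb{C}[[t^2,t^3]]$, $\delta_p=1$) the degree-zero line bundles are the sheaves with local module $(b+t)\,\mathbb{C}[[t^2,t^3]]\subset\mathbb{C}[[t]]$ for $b\in\mathbb{C}^{*}$, each of colength $1$ with $d=0$; their flat limit as $b\to 0$ is $t\,\mathbb{C}[[t]]$, i.e.\ the sheaf $\nu_*\mathcal{O}_{\mathbb{P}^1}(-1)$, whose module in your $0$-normalized convention is all of $\mathbb{C}[[t]]$, of colength $0$ with $d=-1$. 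Only the combination $\sum_p\ell(M_p)-d$ is constant, being pinned down by $\chi(F)$. Consequently your gluing map is not visibly continuous in the $0$-normalized model: the ambient sheaf $\nu_*\mathcal{O}_{\mathbb{P}^1}(d)$ into which you glue jumps between strata, and the naive limit of the submodules $(b+t)\,\mathbb{C}[[t^2,t^3]]$ is $t\,\mathbb{C}[[t^2,t^3]]$, of colength $2$, which is not even a point of the Jacobi factor. The standard repair is to normalize by colength rather than by valuation: embed every $F$ into $\nu_*\mathcal{O}_{\mathbb{P}^1}$ so that the quotient has length exactly $\delta_p$ at each $p$ (possible by twisting by an effective divisor supported on $\nu^{-1}(\Sing C)$, and unique up to a global scalar). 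Then $d=0$ identically, the universal family over $\prod_p\jc_p$ is flat because all colengths are constant, and one gets a bijective morphism $\prod_p\jc_p\to\jc$ of projective varieties; a continuous bijection from a compact space to a Hausdorff space is a homeomorphism, so you never need the much stronger (and unproved) assertion that the two spaces represent the same functor. With this single change of normalization the rest of your argument goes through.
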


If an irreducible plane curve singularity has one Puiseux pair $(p,q)$, its Jacobi factor admits a natural cell decomposition (\cite{piont}, \cite{piont2}).

Consider the semi-group $\gpq\subset\ZZ$ and a $0$--normalized $\gpq$--semi-module $\Delta.$

\begin{definition}\label{p-basis}
The {\it $p$--basis} $\{0=a_0<a_1<\dots<a_{p-1}\}$ of $\Delta$ is the set consisting smallest integers in each congruence class modulo $p.$ In other words $\{a_0,\dots,a_{p-1}\}=\Delta\setminus (\Delta+p).$ The elements of the $p$--basis are called {\it $p$--generators}. We always put $p$--generators in the increasing order $0=a_0<a_1<\dots<a_{p-1}.$
\end{definition}

\begin{definition}(\cite{piont}) 
\label{def of dim}
The dimension of a semi-module $\Delta$ is defined as
$$\dim \Delta=\sum_{j=0}^{p-1}\left|[a_j, a_j + q)\setminus \Delta\right| .$$
\end{definition}

\begin{theorem}(\cite{piont})
\label{pi}
The Jacobi factor of a singularity with one Puiseux pair $(p,q)$ admits a natural cell decomposition with affine cells $C_{\Delta},$
parametrised by the 0-normalized $\gpq$--semi-modules $\Delta.$ The dimension of the cell $C_{\Delta}$ equals 
$\dim \Delta$.
\end{theorem}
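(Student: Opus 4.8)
The plan is to realise the Jacobi factor as a moduli of fractional ideals, stratify it by the valuation semi-module, put canonical coordinates on each stratum, and analyse the equations cutting it out. Working locally at $p$ and using the product decomposition of $\jc$ recalled above, identify $\jc_p$ with a moduli space of $\mathcal{O}$-submodules $M$ of $\mathbb{C}((t))$ with $t^N\mathbb{C}[[t]]\subseteq M\subseteq t^{-N}\mathbb{C}[[t]]$ of the colength appropriate to degree $0$, where $\mathcal{O}=\mathbb{C}[[t^p,t^q]]\subset\mathbb{C}[[t]]$. To $M$ attach the value set $v(M\setminus\{0\})\subset\mathbb{Z}$; subtracting its minimum yields $\Delta(M)\subset\ZZ$, which is a $0$-normalised $\gpq$-semi-module because $M$ is an $\mathcal{O}$-module. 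Every such $\Delta$ occurs: for $e=|\Delta\setminus\gpq|$ the module $\bigoplus_{d\in\Delta}\mathbb{C}t^{\,d+e}$ has the correct degree and value semi-module $\Delta$. Hence the locally closed subsets $C_\Delta:=\{M:\Delta(M)=\Delta\}$ form a finite partition of $\jc_p$ indexed by $0$-normalised $\gpq$-semi-modules, and it remains to show each $C_\Delta$ is an affine space of dimension $\dim\Delta$.

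To coordinatise a cell, fix $\Delta$ with $p$-basis $0=a_0<\dots<a_{p-1}$. As $\mathbb{C}[[t]]$ is free of rank $p$ over $\mathbb{C}[[t^p]]$, so is every $M\in C_\Delta$, and the elements of $M$ of least valuation in each residue class mod $p$ form a $\mathbb{C}[[t^p]]$-basis $m_0,\dots,m_{p-1}$ with $v(m_j)=a_j$. The substitutions $m_j\mapsto m_j-\lambda t^{pk}m_i$ (which kill the monomial $t^{\,a_i+pk}$) normalise this basis uniquely to $$m_j=t^{a_j}+\sum_{b\notin\Delta,\ b>a_j}c^j_b\,t^b,$$ no monomial $t^b$ with $b\in\Delta$ surviving; uniqueness holds because a nonzero element of $M$ has valuation in $\Delta$. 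This exhibits $C_\Delta$ as a locally closed subscheme of $\mathbb{A}^{P}$, $P=\sum_j\#\{b\notin\Delta:b>a_j\}$, with coordinates $c^j_b$; conversely any tuple $(c^j_b)$ defines a $\mathbb{C}[[t^p]]$-module $M$ with $\Delta(M)=\Delta$.

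Since $\mathcal{O}$ is generated over $\mathbb{C}[[t^p]]$ by $t^q$, the module $M$ above is an $\mathcal{O}$-module iff $t^qm_j\in M$ for all $j$. Reduce $t^qm_j$ by the division algorithm, at each step subtracting the unique $\mathbb{C}[[t^p]]$-monomial multiple of the appropriate $m_i$ cancelling the current leading term; this is possible exactly while the leading valuation lies in $\Delta$. Valuations strictly increase, and a leading term of valuation $\notin\Delta$ can never be removed, so $t^qm_j\in M$ iff the coefficient arising at each such forbidden valuation vanishes — one polynomial equation in the $c$'s per forbidden valuation. The core of the argument is to choose an admissible total order on the pairs $(j,b)$ so that these equations become triangular: each reads $c^{j'}_{b'}=(\text{polynomial in strictly earlier coordinates})$, the distinguished coordinate $c^{j'}_{b'}$ occurring linearly with coefficient $1$, and the assignment $(\text{forbidden valuation})\mapsto(j',b')$ being injective. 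The number of equations is $E=\sum_j\#\{b\notin\Delta:b\ge a_j+q\}$; eliminating the $E$ determined coordinates gives $C_\Delta\cong\mathbb{A}^{P-E}$, and $$P-E=\sum_j\#\{b\notin\Delta:a_j<b<a_j+q\}=\sum_{j=0}^{p-1}|[a_j,a_j+q)\setminus\Delta|=\dim\Delta.$$ Letting $\Delta$ run over all $0$-normalised $\gpq$-semi-modules assembles the cells $C_\Delta$ into the required affine cell decomposition of $\jc_p$.

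The hard part is the combinatorics of this last step: constructing the admissible order and verifying injectivity and the unit-leading-coefficient property of the triangular system. The matching between forbidden valuations and coordinates is genuinely nonlocal — already for $(p,q)=(3,4)$ and $\Delta=\gpq$, reducing $t^qm_0$ yields a single equation whose distinguished coordinate belongs to $m_1$, not $m_0$ — so one cannot naively pair the equation coming from $t^qm_j$ with a coordinate of $m_j$. One must also rule out spurious equations, so that $C_\Delta$ is the reduced zero locus and hence a genuine affine space rather than merely a set-theoretic cell, and check that the reduction terminates (it does, since all forbidden valuations lie below the conductor of $\gpq$ and $\Delta\supseteq\gpq$ has finite complement). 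A more conceptual route would present $C_\Delta$ as an attracting set of a suitable $\mathbb{C}^{*}$-action on the affine Springer fibre attached to $(p,q)$ and appeal to Bia\l ynicki--Birula, but identifying the correct cocharacter and verifying the attracting property requires comparable effort.
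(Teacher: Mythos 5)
This theorem is not proved in the paper at all: it is quoted from Piontkowski (\cite{piont}, \cite{piont2}), and Definition \ref{def of dim} is simply his dimension formula. Your outline is in fact a faithful reconstruction of Piontkowski's actual strategy --- stratify the moduli of fractional $\mathcal{O}$-ideals by the value semi-module, normalize a $\mathbb{C}[[t^p]]$-basis $m_j=t^{a_j}+\sum_{b\notin\Delta,\,b>a_j}c^j_b t^b$, impose $t^qm_j\in M$, and count $P-E=\dim\Delta$ --- and your worked example for $(3,4)$ is correct. But the step you explicitly set aside as ``the hard part'' is not a routine verification to be deferred: it \emph{is} the theorem. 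Without the admissible order making the system triangular (each equation of the form $c^{j'}_{b'}=$ polynomial in earlier coordinates, with the assignment of forbidden valuations to coordinates injective), the argument only exhibits $C_\Delta$ as the zero locus of $E$ equations inside $\mathbb{A}^P$, which gives every component of $C_\Delta$ dimension at least $P-E$; it does not show $C_\Delta$ is nonempty beyond the monomial point, irreducible, smooth, or isomorphic to $\mathbb{A}^{P-E}$. This is exactly where Piontkowski's proof concentrates its effort, and it is also where the statement becomes delicate: for value semigroups with more generators the analogous strata need not be affine spaces, so the triangularity genuinely uses that $\Gamma$ is generated by two coprime elements. Your own observation that the matching is nonlocal (the equation from $t^qm_0$ determining a coordinate of $m_1$) shows you have located the difficulty, but locating it is not resolving it.

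A second, smaller gap: to conclude that these affine cells compute the homology of the Jacobi factor (which is how Theorem \ref{pi} is used in Theorem \ref{piontcell} and Lemma \ref{arean}), a set-theoretic partition into affine spaces is not enough; one needs the decomposition to be filtrable, i.e.\ admit an ordering in which each union of initial cells is closed (or open). Piontkowski establishes this by ordering the semi-modules appropriately; your proposal does not address it. Your alternative suggestion of realizing the cells as Bia{\l}ynicki--Birula attracting sets of a $\mathbb{C}^{*}$-action would handle both issues at once, but as you note it requires identifying the correct cocharacter and is not carried out either.
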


\subsection{Combinatorics of Piontkowski's Decomposition.}

In this section the coprime integers $(p,q)$ will be fixed, so we drop the superscript $(p,q)$ in the notations for the semigroup $\Gamma=\gpq,$ the $p\times q$ rectangle $R=R^{p,q},$ and the subset $R_{+}=\rpq$ of boxes of $R,$ which lie under the left-top to right-bottom diagonal.

We will parametrise these cells of Piontkowski's decomposition by certain Young diagrams. Let us label the boxes of $R$ 
with integers, so that the shift by $1$ up subtracts
$p,$ and the shift by $1$  to the right subtracts $q.$ We normalize these numbers so that
$pq$ is in the box $(0,0)$ (note that this box is not in the rectangle $R,$ as we start enumerating
boxes from $1$). In other words, the numbers are given by the linear function $f(x,y)=pq-qx-py.$

One can see that the labels of the boxes of $R_+$ are positive, while all other labels in
$R$ are negative. Moreover, numbers in the boxes of $R_+$ are exactly the numbers from the
complement $\ZZ\backslash\Gamma,$ and each such number appears only once in $R_+.$ In
particular, the area of $R_+$ is equal to $\delta=\frac{(p-1)(q-1)}{2}.$

\begin{definition}
For a  $0$-normalized $\Gamma$--semi-module $\Delta$, let $D(\Delta)$ denote the set of boxes with labels belonging to
$\Delta\setminus\Gamma$.
\end{definition}

\begin{remark}
$D(\Delta)$ is a Young diagram of area $|\Delta\setminus\gpq|$.
The $p$--basis of $\Delta$ consists of numbers on the top of columns of $D(\Delta)$. If the corresponding column is empty, one should take the number below this column. The correspondence between $\Delta$ and $D(\Delta)$ is bijective.
\end{remark}

We illustrate the definition of the diagram $D(\Delta)$ in Figure \ref{5times7}.

\begin{figure}[ht]
\begin{center}
\begin{tikzpicture}
\fill [color=lightgray] (0,0)--(0,5)--(1,5)--(1,4)--(2,4)--(2,2)--(3,2)--(3,1)--(4,1)--(4,0)--(0,0);
\fill [color=gray] (0,0)--(0,4)--(1,4)--(1,2)--(2,2)--(2,0)--(0,0);
\draw (0,0) grid (5,7);
\draw [dashed] (2,-1) grid (5,0);
\draw (2.5,-0.5) node {$14$};
\draw (3.5,-0.5) node {$7$};
\draw (4.5,-0.5) node {$0$};
\draw (0.5,0.5) node {$23$};
\draw (1.5,0.5) node {$16$};
\draw (2.5,0.5) node {$9$};
\draw (3.5,0.5) node {$2$};
\draw (4.5,0.5) node {$-5$};
\draw (0.5,1.5) node {$18$};
\draw (1.5,1.5) node {$11$};
\draw (2.5,1.5) node {$4$};
\draw (3.5,1.5) node {$-3$};
\draw (4.5,1.5) node {$-10$};
\draw (0.5,2.5) node {$13$};
\draw (1.5,2.5) node {$6$};
\draw (2.5,2.5) node {$-1$};
\draw (3.5,2.5) node {$-8$};
\draw (4.5,2.5) node {$-15$};
\draw (0.5,3.5) node {$8$};
\draw (1.5,3.5) node {$1$};
\draw (2.5,3.5) node {$-6$};
\draw (3.5,3.5) node {$-13$};
\draw (4.5,3.5) node {$-20$};
\draw (0.5,4.5) node {$3$};
\draw (1.5,4.5) node {$-4$};
\draw (2.5,4.5) node {$-11$};
\draw (3.5,4.5) node {$-18$};
\draw (4.5,4.5) node {$-25$};
\draw (0.5,5.5) node {$-2$};
\draw (1.5,5.5) node {$-9$};
\draw (2.5,5.5) node {$-16$};
\draw (3.5,5.5) node {$-23$};
\draw (4.5,5.5) node {$-30$};
\draw (0.5,6.5) node {$-7$};
\draw (1.5,6.5) node {$-14$};
\draw (2.5,6.5) node {$-21$};
\draw (3.5,6.5) node {$-28$};
\draw (4.5,6.5) node {$-35$};
\draw [dashed] (0,7)--(5,0);
\draw [line width=1] (0,0)--(0,4)--(1,4)--(1,2)--(2,2)--(2,0)--(0,0);
\end{tikzpicture}
\caption{Here $p=5,\ q=7,$ and $\Delta=\ZZ\backslash \{1,2,3,4,6,9\}=\{0,5,7,8,10,11,\dots\}.$ The $p$--basis is $\{0,7,8,11,14\}.$}\label{5times7}
\end{center}
\end{figure}

We are ready to give the description of dimensions of the cells $C_{\Delta}$ in terms of the corresponding
Young diagrams $D(\Delta)$. To shorten the notations, we use $D=D(\Delta).$

\begin{theorem}
\label{piontcell}
The dimensions of cells can be expressed through the $h^{+}$ statistic:
$$\dim C_{\Delta}=\frac{(p-1)(q-1)}{2}-\hpq(D(\Delta)).$$
\noindent (See Definition \ref{defhplus} for the definition of $\hpq.$)
\end{theorem}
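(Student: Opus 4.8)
The plan is to translate everything into the language of the labeling function $f(x,y)=pq-qx-py$ and to compute both sides of the identity box by box. Recall that $\dim\Delta=\sum_{j=0}^{p-1}\bigl|[a_j,a_j+q)\setminus\Delta\bigr|$, where $a_0<\dots<a_{p-1}$ is the $p$-basis of $\Delta$. The first step is to reorganize this sum: since each of the intervals $[a_j,a_j+q)$ has length exactly $q$ and the $a_j$ exhaust the residues mod $p$, I would like to show that $\sum_j \bigl|[a_j,a_j+q)\setminus\Delta\bigr|$ counts, with appropriate multiplicity, certain "gaps" of $\Delta$, i.e. elements of $\ZZ\setminus\Delta$, or rather elements of $\Delta\setminus\Gamma$ paired against gaps. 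The cleanest route: for a gap $g\in\ZZ\setminus\Delta$, count in how many intervals $[a_j,a_j+q)$ it lies; equivalently count the number of $p$-generators $a_j$ with $a_j\le g< a_j+q$, i.e. $g-q< a_j\le g$. So $\dim\Delta=\sum_{g\notin\Delta}\#\{a_j: g-q<a_j\le g\}$.

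Next I would pass from gaps of $\Delta$ to boxes of the rectangle $R$. A gap $g\notin\Delta$ with $g>0$ corresponds, via $f$, to a box either in $R_+$ (if $g\notin\Gamma$, so $g$ is a label of a box in $R_+$ not covered by $D=D(\Delta)$... wait, more carefully: labels in $R_+$ are exactly $\ZZ\setminus\Gamma$, and $D(\Delta)$ consists of those whose label is in $\Delta\setminus\Gamma$, so the boxes of $R_+\setminus D$ have labels in $\ZZ\setminus\Delta$ that happen to not be in $\Gamma$). The gaps $g\notin\Delta$ split as those in $\Gamma$ (finitely many, since $\ZZ\setminus\Gamma$ is finite and... no: $\Delta\setminus\Gamma$ finite means $\Gamma\setminus\Delta$ could be infinite? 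No — $0\in\Delta$, $\Delta+\Gamma\subseteq\Delta$, so $\Gamma\subseteq\Delta$, hence all gaps $g\notin\Delta$ satisfy $g\notin\Gamma$, i.e. $g$ is a genuine label of a box of $R_+$). Good: so the gaps of $\Delta$ are exactly the labels of boxes of $R_+\setminus D$. Thus $\dim\Delta=\sum_{c\in R_+\setminus D}\#\{\text{$p$-generators }a_j: f(c)-q<a_j\le f(c)\}$, and since $\delta=|R_+|$, proving the theorem is equivalent to showing
\begin{equation*}
\hpq(D)=|R_+|-\dim\Delta=\sum_{c\in R_+\setminus D}\Bigl(1-\#\{a_j: f(c)-q<a_j\le f(c)\}\Bigr)+\text{(contribution that must vanish, or be reinterpreted over $c\in D$)}.
\end{equation*}
The honest reformulation I expect: $\hpq(D)=\#\{c\in R_+: \text{some arm/leg inequality}\}$, and I want to match this, cell-by-cell inside $R_+$, with an indicator built from $p$-generators. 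So the real content is a \emph{local} identification: for each box $c$ of $R_+$, the condition $\frac{a(c)}{l(c)+1}\le \frac{p}{q}<\frac{a(c)+1}{l(c)}$ (arms/legs taken inside $D$) should be equivalent to a statement about whether the top of the column through $c$, and the relevant row, interact with the $p$-basis of $\Delta$ in a prescribed way — concretely, that $c$ contributes to $\hpq(D)$ iff the label directly relating $c$ to a $p$-generator falls in the half-open interval $[a_j,a_j+q)$ and is (resp.\ is not) in $\Delta$.

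Here is the step I expect to be the main obstacle, and how I would attack it. The arm and leg of a box $c\in D$ are measured \emph{within the Young diagram} $D$, whereas the $p$-generators and the intervals $[a_j,a_j+q)$ live in the ambient rectangle $R$ and encode the full column heights of $D$ together with the geometry of the diagonal $f=0$. I would first establish the dictionary: if $c=(x,y)$ and $D$ has column heights $\lambda_1\ge\lambda_2\ge\dots$, then $l(c)$ and $a(c)$ are differences of column heights, and the key quantities $a(c)+1$, $l(c)+1$ get identified with differences $a_j-a_k$ of $p$-generators (because the $p$-generators are precisely the labels on tops of columns, shifted by $p$ for empty columns, and moving one row up subtracts $p$ while moving one column right subtracts $q$). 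Under this dictionary the fraction comparison $\frac{a(c)}{l(c)+1}\le\frac pq<\frac{a(c)+1}{l(c)}$ becomes a pair of inequalities of the form $q\,(a_j-a_k)\gtrless p\cdot(\text{something})$, which by the labeling function is exactly a comparison of labels of boxes, hence of membership in $\Gamma$ or in $\Delta$. Then I would check that summing these local equivalences over $c\in R_+$ reproduces $\sum_j|[a_j,a_j+q)\cap(\text{stuff})|$ — essentially a double-counting/Fubini argument, grouping boxes of $R_+$ by which column and which $p$-generator they are associated to. I expect the bookkeeping of \emph{which} $a_j$ and which interval endpoint is attached to a given box $c$ (including the edge cases: $l(c)=0$ where we set $\frac{a(c)+1}{l(c)}=\infty$, empty columns of $D$, and boxes of $R_+$ outside $D$) to be the fiddly heart of the proof; once the correspondence is pinned down precisely, both sides are manifestly equal and the theorem follows by summation. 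A sanity check along the way: specialize to $p=n$, $q=n+1$, where $R_+=\rnn$ is a staircase and the $h^+$ statistic is known to match Haglund's $\mathrm{bounce}$/$\mathrm{dinv}$ combinatorics, recovering formula (\ref{Pnnp1}).
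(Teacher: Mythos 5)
Your reduction of $\dim\Delta$ to a count over boxes is sound: the Fubini step giving $\dim\Delta=\sum_{g\notin\Delta}\#\{a_j: g-q<a_j\le g\}$ and the identification of the gaps of $\Delta$ with the labels of $R_+\setminus D$ (valid because $\Gamma\subseteq\Delta$) are both correct. This is a mild variant of the paper's own first reduction, which instead introduces $q$-cogenerators, proves that the number of cogenerators $\ge a$ equals $g(a)=|[a,a+q)\setminus\Delta|$, and thereby realizes $\dim\Delta$ as the cardinality of a set $U(D)$ of boxes of the full rectangle $R$ (one box per pair of a $p$-generator and a larger $q$-cogenerator).

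The genuine gap is the step you explicitly defer: showing that your count equals $|R_+|-\hpq(D)$. You predict a \emph{local}, box-by-box equivalence inside $R_+$ after which "both sides are manifestly equal," but no such local identification exists, and your write-up does not produce one. Concretely, your left-hand side assigns to each box $c\in R_+\setminus D$ a multiplicity $N(c)=\#\{a_j: f(c)-q<a_j\le f(c)\}$ which varies and can exceed $1$ (e.g.\ in the paper's $(5,7)$ example the gap $9$ has $N=2$), whereas the right-hand side is a sum of indicators: $1$ on every box of $R_+\setminus D$ plus $1$ on every box of $D$ violating the arm/leg condition. Redistributing the excess multiplicity from boxes of $R_+\setminus D$ onto boxes of $D$ is a nontrivial bijection problem, and it is exactly the content of the paper's Lemma \ref{UV}: the set $U(D)$ (which is \emph{not} contained in $R_+$) is split into three pieces and matched with $V(D)=R_+\setminus H_{\frac{p}{q}}(D)$ by two explicit maps --- a row reversal $\varphi_1$ on $R\setminus D$, and a map $\varphi_3$ that moves a box $c$ down by $\left\lfloor a(c)q/p\right\rfloor$ rows (adjusting the column so as to preserve the arm), whose well-definedness and invertibility hinge on $a(c)q/p$ never being an integer for $(p,q)$ coprime. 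This non-local shift is the actual heart of the proof and is absent from your proposal. The dictionary you sketch between arms/legs and differences of $p$-generators is the right starting point --- it is precisely the paper's Figure \ref{legarm}, where $a_i=f(c)-l(c)p$, $b_j=f(c)-(a(c)+1)q$ for $c\in D$ and $a_i=f(c)+(l(c)+1)p$, $b_j=f(c)+a(c)q$ for $c\in R\setminus D$ --- but by itself it only rewrites the two sides; it does not match them.
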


\begin{proof}
The proof is based on Lemma \ref{UV}, proved in Section \ref{Section UV}. Here we define subsets $U(D),V(D)\subset R,$ such that $|U(D)|=\dim C_{\Delta},$ and $|V(D)|=(p-1)(q-1)/2-\hpq(D).$ In Lemma \ref{UV} we construct a bijection between $U(D)$ and $V(D).$

The definition of the set $V$ is straightforward. According to the definition, $\hpq$ counts the boxes $c\in D(\Delta),$ such that $\frac{a(c)}{l(c)+1}\le \frac{p}{q}< \frac{a(c)+1}{l(c)}.$ On the other side, we know that $|R_+|=\frac{(p-1)(q-1)}{2}.$

\begin{definition}
Define the sets
$$H_{\frac{p}{q}}(D)=\left\{c\in D: \frac{a(c)}{l(c)+1}\le\frac{p}{q}<\frac{a(c)+1}{l(c)}\right\},$$
$$V(D)=R_+\backslash H_{\frac{p}{q}}(D).$$
\end{definition}

\begin{remark}
The equality $\frac{a(c)}{l(c)+1}=\frac{p}{q}$ in the definition of $H_{\frac{p}{q}}(D)$ is never attained, because $p$ and $q$ are coprime, and $a(c)<p$ and $l(c)<q.$
\end{remark}

In order to define the set $U(D)$ we will need to provide another way of computing the dimension of the cell $C_{\Delta}.$ We will need the following definition:

\begin{definition}
We call an integer $y$ a $q$-cogenerator of  $\Delta$ if $y\not\in \Delta$ and $y+q\in \Delta$.
\end{definition}
\begin{remark}
$q$-cogenerators are the labels on the leftmost boxes of the rows of the complement $R\backslash D.$ In particular, in the example in Figure \ref{5times7} the $q$--cogenerators are $\{-7,-2,1,3,4,6,9\}.$
\end{remark}

\begin{lemma}
The number of $q$-cogenerators of  $\Delta$ greater or equal to $a$ equals  $g(a):=|[a,a+q)\setminus \Delta|$.
\end{lemma}

\begin{proof}
Consider the arithmetic sequences with difference $q$ starting from the elements of $[a, a + q)\setminus \Delta$.
They are pairwise disjoint and each of them contains exactly one $q$-cogenerator.
\end{proof}

\begin{corollary}
The dimension of a cell $C_{\Delta}$ equals  the number of pairs $(a_i,b_j)$ where $a_i$ is a $p$-generator of $\Delta$,
$b_j$ is a $q$-cogenerator of $\Delta,$ and $a_i<b_j$.
\end{corollary}

Each column of the rectangle $R$ contains exactly one $p$-generator, and each row of $R$ contains exactly one $q$-cogenerator. Therefore, there is a natural bijection between the boxes of $R$ and the couples $(a_i,b_j),$ where $a_i$ is a $p$-generator of $\Delta$ and $b_j$ is a $q$-cogenerator of $\Delta$.

\begin{definition}
We define the set $U(D)\subset R$ to be the set of boxes, such that the corresponding couple $(a_i,b_j)$ satisfies $a_i<b_j.$
\end{definition}

One can reformulate this definition in terms of arms and legs of boxes. Before doing so, we need to extend the definitions of arms and legs to the boxes in $R\backslash D:$

\begin{definition}
Let $c=(x,y)\in R\backslash D.$ We define the leg and arm of $c$ by the equations
$$l(c)=\max\left\{n: (x,y-n)\notin D\right\},\quad a(c)=\max\left\{n: (x-n,y)\notin D\right\}.$$
\end{definition}

One immediately gets the following Lemma:

\begin{lemma}\label{U}
The set $U(D)$ can be described in terms of the $a(c)$ and $l(c)$ as follows:
$$
U(D)=\{c\in R\backslash D: a(c)q>(l(c)+1)p\}\cup \{c\in D: (a(c)+1)q\le l(c)p\},
$$
\end{lemma}

\begin{remark}
As above, the equality $(a(c)+1)q=l(c)p$ is never attained.
\end{remark}

We illustrate Lemma \ref{U} as well as the definitions of arms and legs in Figure \ref{legarm}.

\begin{figure}[ht]
\begin{center}
\begin{tikzpicture}
\draw (0,0)--(0,7)--(5,7)--(5,0)--(0,0);
\draw (0,5)--(1,5)--(1,3)--(2,3)--(2,2)--(3,2)--(3,0.5)--(4,0.5)--(4,0);
\draw (1.3,0.7) rectangle (1.7,1.1);
\draw (3,0.7) rectangle (3.4,1.1);
\draw (1.3,2.6) rectangle (1.7,3);
\draw [<->,>=stealth] (1.7,0.9)--(3,0.9);
\draw [<->,>=stealth] (1.5,1.1)--(1.5,3);
\draw (2.3,0.6) node {\small $a(c)$};
\draw (1.2,2) node {\small $l(c)$};
\draw (0.5,3) node {$D$};
\draw [->,>=stealth] (1.2,0.6)--(1.5,0.9);
\draw [->,>=stealth] (3.5,1.2)--(3.2,0.9);
\draw [->,>=stealth] (2,3.2)--(1.6,2.8);
\draw (0.9,0.4) node {\small $f(c)$};
\draw (3.7,1.2) node {\small $b_j$};
\draw (2.2,3.2) node {\small $a_i$};

\draw (6,0)--(6,7)--(11,7)--(11,0)--(6,0);
\draw (6,5)--(7,5)--(7,3)--(8,3)--(8,2)--(9,2)--(9,0.5)--(10,0.5)--(10,0);
\draw (8.3,3.7) rectangle (8.7,4.1);
\draw (8.3,2) rectangle (8.7,1.6);
\draw (7,3.7) rectangle (7.4,4.1);
\draw [<->,>=stealth] (7,3.9)--(8.3,3.9);
\draw [<->,>=stealth] (8.5,2)--(8.5,3.7);
\draw (7.8,4.1) node {\small $a(c)$};
\draw (8.9,2.9) node {\small $l(c)$};
\draw (6.5,1) node {$D$};
\draw [->,>=stealth] (8.8,4.2)--(8.5,3.9);
\draw [->,>=stealth] (8.1,1.4)--(8.4,1.7);
\draw [->,>=stealth] (6.8,3.5)--(7.1,3.8);
\draw (9.1,4.4) node {\small $f(c)$};
\draw (6.7,3.4) node {\small $b_j$};
\draw (8,1.4) node {\small $a_i$};
\end{tikzpicture}
\caption{\small For a box $c\in D$ one has $a_i=f(c)-l(c)p$ and $b_j=f(c)-(a(c)+1)q;$ for $c\in R\backslash D$ one gets $a_i=f(c)+(l(c)+1)p$ and $b_j=f(c)+a(c)q.$}\label{legarm}
\end{center}
\end{figure}

Now  Theorem \ref{piontcell} follows from Lemma \ref{UV}.
\end{proof}

\subsection{Proof of the dimension formula}\label{Section UV}


\begin{lemma}
\label{UV}
There exist a natural bijection between the sets $U(D)$ and $V(D)$.
\end{lemma}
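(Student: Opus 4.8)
The plan is to construct the bijection by analyzing how $U(D)$ and $V(D)$ differ as subsets of the rectangle $R$, and to partition each of them according to the four regions cut out by the inequalities $a(c)q \lessgtr (l(c)+1)p$ (inside $R\setminus D$) and $(a(c)+1)q \lessgtr l(c)p$ (inside $D$). Writing $D$ for the diagram and $\overline{D}=R\setminus D$ for its complement, Lemma \ref{U} gives $U(D) = U_1 \cup U_2$ where $U_1 = \{c\in\overline{D}: a(c)q > (l(c)+1)p\}$ and $U_2 = \{c\in D : (a(c)+1)q \le l(c)p\}$; on the other side $V(D) = R_+ \setminus H_{p/q}(D)$ with $H_{p/q}(D)\subset D$, so $V(D) = (R_+\cap \overline{D}) \cup (R_+\cap D \setminus H_{p/q}(D))$. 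Since $R_+\cap\overline{D}$ and $U_1$ both live in $\overline{D}$, and $R_+\cap D\setminus H_{p/q}(D)$ and $U_2$ both live in $D$, it is natural to try to match them up region by region.

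First I would show that on $D$ the two descriptions already nearly coincide. For $c\in D$, membership in $H_{p/q}(D)$ means $\frac{a(c)}{l(c)+1}\le \frac{p}{q} < \frac{a(c)+1}{l(c)}$, i.e. $a(c)q\le (l(c)+1)p$ and $(a(c)+1)q > l(c)p$; since equalities never occur, the complement $D\setminus H_{p/q}(D)$ inside $D$ splits as $\{c\in D: a(c)q > (l(c)+1)p\} \sqcup \{c\in D: (a(c)+1)q < l(c)p\}$. The second set is exactly $U_2$. So I need a bijection between $\{c\in D : a(c)q>(l(c)+1)p\}$ together with $R_+\cap\overline{D}$ (the leftover of $V$) and $U_1=\{c\in\overline{D}: a(c)q>(l(c)+1)p\}$ together with whatever is leftover from $U$; but $U$ has nothing leftover on $D$ beyond $U_2$, so in fact I must biject $\{c\in D: a(c)q>(l(c)+1)p\}\,\sqcup\,(R_+\cap\overline{D})$ with $\{c\in\overline{D}: a(c)q>(l(c)+1)p\}$. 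Equivalently, since $R_+$ has area exactly $\delta=(p-1)(q-1)/2 = |U(D)| + \hpq(D) - (\text{balancing terms})$, a clean way to organize this is to verify the cardinality identity $|U(D)| + \hpq(D) = \delta$ directly first (this is essentially the content of Theorem \ref{piontcell} restated), and then build the explicit map.

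The explicit bijection I would aim for is a ``hook-swapping'' or reflection map: to a box $c$ one associates the box $c'$ determined by reading off the pair $(a_i, b_j)$ of a $p$-generator and a $q$-cogenerator attached to $c$ (as in Figure \ref{legarm}) and re-pairing them, or by the involution on $R$ that sends $(x,y)$ to the box on the ``opposite side'' of the diagonal with complementary arm and leg data. Concretely, the map should send a box with hook lengths $(a,l)$ straddling the ratio $p/q$ in one way to a box with hook lengths roughly $(l\cdot\tfrac{p}{q}, a\cdot\tfrac{q}{p})$-rescaled data on the other side; one checks it is well-defined into $V(D)$, that it respects the region decomposition, and that it has an evident inverse of the same shape. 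I expect the main obstacle to be precisely \textbf{proving well-definedness}: showing that the candidate map sends $U(D)$ into $R_+$ (never outside the diagonal) and into the complement of $H_{p/q}(D)$, which requires carefully juggling the strict and non-strict inequalities $a(c)q > (l(c)+1)p$ versus $\frac{a(c)}{l(c)+1}\le\frac{p}{q}$ and using coprimality of $p,q$ to rule out boundary cases, exactly as flagged in the two Remarks. Once well-definedness and injectivity are in hand, surjectivity should follow either from the symmetric form of the construction or from the cardinality count $|U(D)|=|V(D)|$ established earlier.
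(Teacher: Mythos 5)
Your region analysis is correct and is exactly the paper's set-up: you split $V(D)$ into $V_1=R_+\setminus D$, $V_2=\{c\in D:(a(c)+1)q<l(c)p\}$ and $V_3=\{c\in D:a(c)q>(l(c)+1)p\}$, observe that $V_2=U(D)\cap D$ matches identically, and reduce the lemma to a bijection between $\{c\in R\setminus D:a(c)q>(l(c)+1)p\}$ and $V_1\sqcup V_3$. That reduction is right, and it is where the paper also arrives. But the actual content of the lemma begins at that point, and your proposal stops there. The ``hook-swapping / reflection with rescaled arm--leg data'' map is not pinned down: you give no formula, no target box, and no way to check it lands in $R_+$ or avoids $H_{\frac{p}{q}}(D)$. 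The paper needs two genuinely different maps here, not one: it further splits $\{c=(x,y)\in R\setminus D:a(c)q>(l(c)+1)p\}$ into $U_1=\{c:a(c)q\ge yp\}$ and $U_3=\{c:(l(c)+1)p<a(c)q<yp\}$, sends $U_1$ to $V_1$ by the row-reversal $c=(x,y)\mapsto(p-a(c),y)$ (an involution on $R\setminus D$, with $a(c)q\ge yp$ translating precisely into the reflected box lying in $R_+$), and sends $U_3$ to $V_3$ by dropping $c$ down by $m(c)=\lfloor a(c)q/p\rfloor$ rows (and correcting $x$ so the arm is preserved), using coprimality to get the strict inequalities defining $V_3$ and to invert the map. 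None of this is recoverable from your description; in particular $\varphi_1$ lands in the complement of $D$ while $\varphi_3$ lands inside $D$, so a single uniform ``opposite side of the diagonal'' formula would have to do two qualitatively different things.

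There is also a circularity in your fallback plan: you propose to ``verify the cardinality identity $|U(D)|+\hpq(D)=\delta$ directly first'' and then deduce surjectivity from $|U(D)|=|V(D)|$. But that identity \emph{is} the statement of Theorem \ref{piontcell}, which the paper derives \emph{from} Lemma \ref{UV}; you offer no independent route to it, so it cannot be used as an input. As it stands the proposal is a correct reduction plus a placeholder where the two explicit bijections $\varphi_1$ and $\varphi_3$ need to be constructed and verified.
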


\begin{proof}

Note that $V(D)$ naturally splits into $3$ pieces

$$V=V_1\sqcup V_2\sqcup V_3,$$
where
$V_1=R_+\backslash D,$ $$V_2=\left\{c\in D:\frac{a(c)+1}{l(c)}\le\frac{p}{q}\right\},
\ \mbox{\rm and}\ 
V_3=\left\{ c\in D: \frac{a(c)}{l(c)+1} > \frac{p}{q}\right\} .
$$


Note that $V_2=U\cap D.$ This suggests that the set $U$ should also split into $3$ pieces $U=U_1\sqcup
U_2\sqcup U_3,$ so that $U_2=V_2$ and there are bijections $\varphi_1:U_1\to V_1,$ and $\varphi_3:U_3\to
V_3.$

Let
$$
U_1=\{c=(x,y)\in R\backslash D: a(c)q\ge yp\},\quad
U_2=U\cap D,
$$
and
$$
U_3=\{c=(x,y)\in R\backslash D: (l(c)+1)p<a(c)q<yp\}.
$$

{\bf Bijection $\varphi_1.$} Consider the map $\ophi:R\backslash D\to R\backslash D$ which preserves the rows and inverts the order in each row of $R\backslash D$ (see Figure \ref{phi1}).

\begin{figure}[ht]
\begin{center}
\begin{tikzpicture}
\draw (0,0)--(0,7)--(5,7)--(5,0)--(0,0);
\draw (0,5)--(1,5)--(1,3)--(2,3)--(2,2)--(3,2)--(3,0.5)--(4,0.5)--(4,0);
\draw [dashed] (0,7)--(5,0);
\draw (1,3.5) rectangle (5,3.7);
\draw (1.7,3.5) rectangle (1.9,3.7);
\draw (4.3,3.5) rectangle (4.1,3.7);
\draw [<->,>=stealth] (1,3.6)--(1.7,3.6);
\draw [<->,>=stealth] (4.3,3.6)--(5,3.6);
\draw (3,1.2) rectangle (5,1.4);
\draw (3.3,1.2) rectangle (3.5,1.4);
\draw (4.7,1.2) rectangle (4.5,1.4);
\draw [<->,>=stealth] (3,1.3)--(3.3,1.3);
\draw [<->,>=stealth] (4.7,1.3)--(5,1.3);
\draw [<->,>=stealth] (1.8,3.7) .. controls (3,4) and  (3,4) .. (4.2,3.7);
\draw [<->,>=stealth] (3.4,1.4) .. controls (4,1.7) and  (4,1.7) .. (4.6,1.4);
\draw (3.1,4.2) node {\small $\ophi$};
\draw (4.1,1.8) node {\small $\ophi$};
\draw (0.5,1) node {$D$};
\end{tikzpicture}
\caption{The map $\ophi$ switches the order in each row of $R\backslash D$.}\label{phi1}
\end{center}
\end{figure}

In other words, $\ophi$ is given by
$$
\ophi(c)=(p-a(c),y)
$$
\noindent where $c=(x,y).$

We set $\varphi_1={\ophi}|_{U_1}.$ Note that ${\ophi}^2=\Id_{R\backslash D}.$ Therefore, one only needs to prove that $c\in U_1$ is equivalent to $\ophi(c)\in V_1=R_+\backslash D.$ Indeed,
$$
a(c)q\ge yp\Leftrightarrow pq-q(p-a(c))-py\ge 0\Leftrightarrow (p-a(c),y)\in R_+,
$$
\noindent and $(p-a(c),y)\notin D$ by the definition of $a(c).$

{\bf Bijection $\varphi_3.$} The bijection $\varphi_3:U_3\to V_3$ is slightly more tricky.

For $c=(x,y)\in U_3,$ let
$
m(c)= \left\lfloor \frac{a(c)q}{p}\right\rfloor.
$
Let $y'=y-m(c).$ Since $c\in U_3,$ it follows that $(x,y')\in D.$ Indeed,
$
m(c)=\left\lfloor \frac{a(c)q}{p}\right\rfloor\ge l(c)+1
$
since $(l(c)+1)p\le a(c)q.$

Also $m(c)<y,$ because $a(c)q\le yp$ and $\frac{a(c)q}{p}$ cannot be an integer ($a(c)<p,$ and $(p,q)$ are coprime). Therefore, $y-l(c)-1\ge y-m(c)=y'>0.$

Let $x'=x+a(x,y')-a(x,y).$ 
Set $\varphi_3(c)=c':=(x',y').$ (See Figure \ref{phi3} for the illustration.)

\begin{figure}[ht]
\begin{center}
\begin{tikzpicture}
\draw (0,0)--(0,7)--(5,7)--(5,0)--(0,0);
\draw (0,5)--(1,5)--(1,3)--(2,3)--(2,2)--(3,2)--(3,0.5)--(4,0.5)--(4,0);
\draw [dashed] (0,7)--(5,0);
\draw (2.3,3.1) rectangle (2.5,3.3);
\draw (2.3,1) rectangle (2.5,1.2);
\draw (1.7,1) rectangle (1.5,1.2);
\draw [<->,>=stealth] (1,3.2)--(2.3,3.2);
\draw [<->,>=stealth] (2.4,1.1)--(2.4,3.2);
\draw [<->,>=stealth] (1.7,1.1)--(3,1.1);
\draw [->,>=stealth] (2.3,3.1).. controls (1.5,2.5) and (1.5,2) ..(1.6,1.2);
\draw (1.6,3.4) node {\small $a(c)$};
\draw (2.3,0.7) node {\small $a(c)$};
\draw (2.6,2.5) node {\small $m$};
\draw (1.3,2) node {\small $\varphi_3$};
\draw (0.5,1) node {$D$};
\end{tikzpicture}
\caption{The map $\varphi_3.$}\label{phi3}
\end{center}
\end{figure}

Since $c\in U_3$, we have
$$
a(c')=a(c)\quad \mbox{\rm and}\quad
l(c')+1 \le m(c),
$$
Therefore
$$
\frac{a(c')}{l(c')+1}\ge a(c)/\left\lfloor \frac{a(c)q}{p}\right\rfloor\ge \frac{p}{q}.
$$

We saw before that $\frac{a(c)q}{p}$ is not an integer. Therefore, the second inequality is actually strict. So, $c'\in V_3.$

One can check that the map $\varphi_3$ is invertible. Indeed, given $c'=(x',y')\in V_3,$ one set $m(c')=\left\lfloor
\frac{a(c')q}{p}\right\rfloor.$ Then $c=(x,y)=\varphi_3^{-1}(c')$ is uniquely determined by $y=y'+m(c')$ and
$a(c)=a(c').$

Since
$$l(c)+1\le m(c')=\left\lfloor
\frac{a(c')q}{p}\right\rfloor=\left\lfloor\frac{a(c)q}{p}\right\rfloor<\frac{a(c)q}{p}\quad \mbox{\rm and}\quad $$
$$y=y'+m(c')>m(c')=\left\lfloor\frac{a(c')q}{p}\right\rfloor\Longrightarrow y>\frac{a(c')q}{p}=\frac{a(c)q}{p},$$

The box $c$ belongs to the set $U_3$ (note that we again used that $\frac{a(c)q}{p}\notin\mathbb Z$).

\end{proof}

\begin{example}
Let $(p,q)=(5,6).$ Consider the diagram $D$ consisting of two columns of height $3.$ We illustrate the bijections $\varphi_i$ in  Figure \ref{exampleUV}.

\begin{figure}[ht]
\begin{center}
\begin{tikzpicture}
\draw [step=0.5, color=gray!50!white, very thin] (0,0) grid (2.5,3);
\filldraw [fill=gray!60!white] (0.5,1.5)--(1,1.5)--(1,2)--(0.5,2)--(0.5,1.5);
\draw (0.75,1.75) node {$3$};
\filldraw [fill=gray!15!white] (0,1)--(0.5,1)--(0.5,1.5)--(0,1.5)--(0,1);
\draw [->,>=stealth,very thin] (0.75,1.75).. controls (0.7,1.6) and (0.4,1.3) ..(0.25,1.25);

\filldraw [fill=gray!60!white] (0.5,0)--(1,0)--(1,0.5)--(0.5,0.5)--(0.5,0);
\draw (0.75,0.25) node {$2$};

\filldraw [fill=gray!60!white] (2,1.5)--(2.5,1.5)--(2.5,2)--(2,2)--(2,1.5);
\draw (2.25,1.75) node {$1$};
\filldraw [fill=gray!15!white] (0,1.5)--(0.5,1.5)--(0.5,2)--(0,2)--(0,1.5);
\draw [->,>=stealth,very thin] (2.25,1.85).. controls (1.75,2.25) and (0.75,2.25) ..(0.25,1.85);
\filldraw [fill=gray!60!white] (2,0.5)--(2.5,0.5)--(2.5,1)--(2,1)--(2,0.5);
\draw (2.25,0.75) node {$1$};
\filldraw [fill=gray!15!white] (1,0.5)--(1.5,0.5)--(1.5,1)--(1,1)--(1,0.5);
\draw [->,>=stealth,very thin] (2.25,0.85).. controls (2,1.2) and (1.5,1.2) ..(1.25,0.85);
\filldraw [fill=gray!60!white] (1.5,0)--(2,0)--(2,0.5)--(1.5,0.5)--(1.5,0);
\draw (1.75,0.25) node {$1$};
\filldraw [fill=gray!60!white] (2,0)--(2.5,0)--(2.5,0.5)--(2,0.5)--(2,0);
\draw (2.25,0.25) node {$1$};
\filldraw [fill=gray!15!white] (1,0)--(1.5,0)--(1.5,0.5)--(1,0.5)--(1,0);
\draw [->,>=stealth,very thin] (2.25,0.35).. controls (2,0.7) and (1.5,0.7) ..(1.25,0.35);

\draw [dashed, gray!80!white] (0,3)--(2.5,0);
\draw [very thick] (0,0) rectangle (2.5,3);
\draw [very thick] (0,0) rectangle (1,1.5);

\draw (1.25,-0.4) node {$U(D)$};

\draw [step=0.5, color=gray!50!white, very thin] (5,0) grid (7.5,3);

\filldraw [fill=gray!60!white] (5,1)--(5.5,1)--(5.5,1.5)--(5,1.5)--(5,1);
\draw (5.25,1.25) node {$3$};

\filldraw [fill=gray!60!white] (5.5,0)--(6,0)--(6,0.5)--(5.5,0.5)--(5.5,0);
\draw (5.75,0.25) node {$2$};

\filldraw [fill=gray!60!white] (5,1.5)--(5.5,1.5)--(5.5,2)--(5,2)--(5,1.5);
\draw (5.25,1.75) node {$1$};
\filldraw [fill=gray!60!white] (6,0.5)--(6.5,0.5)--(6.5,1)--(6,1)--(6,0.5);
\draw (6.25,0.75) node {$1$};
\filldraw [fill=gray!60!white] (6.5,0)--(7,0)--(7,0.5)--(6.5,0.5)--(6.5,0);
\draw (6.75,0.25) node {$1$};
\filldraw [fill=gray!60!white] (6,0)--(6.5,0)--(6.5,0.5)--(6,0.5)--(6,0);
\draw (6.25,0.25) node {$1$};

\draw [very thick] (5,0) rectangle (7.5,3);
\draw [very thick] (5,0) rectangle (6,1.5);

\draw [dashed, gray!80!white] (5,3)--(7.5,0);

\draw (6.25,-0.4) node {$V(D)$};

\end{tikzpicture}
\caption{The left picture represents the set $U(D)$ with arrows representing the bijection $\varphi.$ The right picture represents the set $V(D).$ Numbers represent the splittings of $U(D)$ and $V(D)$ into subsets $U_1,U_2,U_3$ and $V_1,V_2,V_3$ correspondingly.}\label{exampleUV}
\end{center}
\end{figure}

\end{example}

\section{$q,t$-Catalan Numbers and Poincar\'e Polynomials.}
\label{sec:poly}

When $(p,q)=(n,n+1)$, the statistic $\hnn(D)$ is also called $\dinv(D)$.

In \cite{gaha} A. Garsia and M. Haiman constructed the sequence of polynomials $C_{n}(q,t)$. These polynomials are symmetric in the variables $q$ and $t.$ They also unified two previously known one-parameter generalizations of the Catalan numbers:
\begin{equation}
C_n(1, t)=\sum_{D\subset \rnn}t^{\binom{n}{2}-\area(D)},\quad q^{\binom{n}{2}}C_{n}(q,1/q)=\frac{1}{[n+1]_{q}}\binom{2n}{n}_{q},
\end{equation}
\noindent Here we used the standard notation:
$$[k]_q=(1-q^k)/(1-q), [k]_q!=[1]_q[2]_q\cdots[k]_q, \binom{n}{k}_q=\frac{[n]_q!}{[k]_q![n-k]_q!}.$$
The relation of $q,t$-Catalan numbers to the geometry of the Hilbert scheme of points on $\mathbb{C}^2$ was discovered in \cite{haiman}.

In  \cite{gahaim}, \cite{haglund} the  following formula for the $q,t$-Catalan numbers was proved:
\begin{equation}
C_n(q, t)=\sum_{D\subset \rnn}q^{\dinv(D)}t^{\binom{n}{2}-|D|}.
\end{equation}

\begin{corollary}
If $(p,q)=(n,n+1)$, then $\dim\Delta_{D}=\binom{n}{2}-\dinv(D).$ Therefore,
\begin{equation}
\sum_{\Delta\in \Mod_{\gnn}}q^{\binom{n}{2}-\dim(\Delta)}t^{|\ZZ\backslash \Delta|}=C_{n}(q,t)
\end{equation}
\end{corollary}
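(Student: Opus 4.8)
The plan is to deduce the Corollary directly from Theorem \ref{piontcell} together with the Garsia--Haglund formula for $C_n(q,t)$ quoted just above it. First I would specialize Theorem \ref{piontcell} to the case $(p,q)=(n,n+1)$: there the diagram $D=D(\Delta)$ ranges over all Young diagrams inscribed in $\rnn$ as $\Delta$ ranges over $\Mod_{\gnn}$ (this bijectivity of $\Delta\mapsto D(\Delta)$ is recorded in the remark following the definition of $D(\Delta)$), the quantity $\frac{(p-1)(q-1)}{2}$ becomes $\frac{(n-1)n}{2}=\binom{n}{2}$, and the statistic $\hpq$ becomes $\hnn=\dinv$ by the identification made at the start of Section \ref{sec:poly}. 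Hence Theorem \ref{piontcell} reads $\dim C_\Delta=\binom{n}{2}-\dinv(D(\Delta))$, and since $\dim C_\Delta=\dim\Delta$ by Theorem \ref{pi}, this is exactly the first assertion $\dim\Delta_D=\binom{n}{2}-\dinv(D)$ (here $\Delta_D$ denotes the semi-module with $D(\Delta_D)=D$, i.e. the inverse of the bijection $\Delta\mapsto D(\Delta)$).

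Next I would rewrite the generating function over semi-modules as a generating function over diagrams. Using the bijection $\Delta\leftrightarrow D=D(\Delta)$, the exponent $\binom{n}{2}-\dim(\Delta)$ equals $\dinv(D)$ by the formula just established, and the exponent $|\ZZ\setminus\Delta|=|\Delta\setminus\Gamma|$ (the two complements have the same cardinality because $\Gamma\subset\Delta$ and $\ZZ\setminus\Gamma$ is finite of size $\binom{n}{2}$, so $|\ZZ\setminus\Delta|=|\ZZ\setminus\Gamma|-|\Delta\setminus\Gamma|=\binom{n}{2}-|D|$) equals $\binom{n}{2}-|D|$, since $|D(\Delta)|=|\Delta\setminus\Gamma|$. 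Therefore
\begin{equation*}
\sum_{\Delta\in \Mod_{\gnn}}q^{\binom{n}{2}-\dim(\Delta)}t^{|\ZZ\setminus\Delta|}
=\sum_{D\subset\rnn}q^{\dinv(D)}t^{\binom{n}{2}-|D|}.
\end{equation*}
The right-hand side is precisely the Garsia--Haglund combinatorial formula for $C_n(q,t)$ quoted above, so the sum equals $C_n(q,t)$, which is the second assertion.

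I do not anticipate a genuine obstacle here: the Corollary is a formal consequence of Theorem \ref{piontcell}, the earlier bijection results, and the cited combinatorial formula for $C_n(q,t)$. The only points requiring a moment of care are purely bookkeeping: checking that $\frac{(p-1)(q-1)}{2}=\binom{n}{2}$ when $q=p+1=n+1$, confirming the identification $\hnn=\dinv$ (which is the standard $\dinv$ statistic on Dyck paths / subdiagrams of the staircase, matching Haglund's convention used in \cite{gahaim}, \cite{haglund}), and matching the two complement counts $|\ZZ\setminus\Delta|$ and $|\Delta\setminus\Gamma|$ via the finiteness of $\ZZ\setminus\Gamma$. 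Once these identifications are in place the two displayed generating functions coincide term by term with Haglund's formula, completing the proof.
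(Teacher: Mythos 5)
Your proposal is correct and follows exactly the route the paper intends: the Corollary is stated without proof as an immediate consequence of Theorem \ref{piontcell} (specialized via $\frac{(p-1)(q-1)}{2}=\binom{n}{2}$ and $\hnn=\dinv$), the bijection $\Delta\leftrightarrow D(\Delta)$ with $|D(\Delta)|=|\Delta\setminus\Gamma|$, and the quoted Garsia--Haglund formula. Your bookkeeping of the exponents, including the identity $|\ZZ\setminus\Delta|=\binom{n}{2}-|D|$, is exactly the implicit argument.
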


\begin{lemma}
\label{arean}
The Poincar\'e polynomial of the Jacobi factor of an irreducible plane curve singularity with one Puiseux pair $(n,n+1)$ is equal to
$\sum\limits_{D\subset \rnn}t^{2|D|}.$
\end{lemma}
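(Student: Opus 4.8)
The plan is to combine the Garsia--Haglund formula for the $q,t$-Catalan numbers with Piontkowski's cell decomposition and the dimension formula of Theorem \ref{piontcell}. By Theorem \ref{pi}, the Jacobi factor of the $(n,n+1)$ singularity has an affine cell decomposition with cells $C_\Delta$ indexed by $0$-normalized $\gnn$-semi-modules, so its Poincar\'e polynomial is $P_{n,n+1}(t)=\sum_{\Delta\in\Mod_{\gnn}}t^{2\dim C_\Delta}$, since an affine cell of dimension $d$ contributes $t^{2d}$ to homology and all cells are even-dimensional (no differentials).

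Next I would use the bijection $\Delta\mapsto D(\Delta)$ between $\Mod_{\gnn}$ and Young diagrams $D\subset\rnn$, together with Theorem \ref{piontcell}, which in the $(n,n+1)$ case reads $\dim C_\Delta=\binom{n}{2}-\hnn(D(\Delta))=\binom{n}{2}-\dinv(D(\Delta))$ (using $\delta=\frac{(p-1)(q-1)}{2}=\binom{n}{2}$ and the identification $\hnn=\dinv$ recalled at the start of Section \ref{sec:poly}). Substituting, one gets
\begin{equation}
P_{n,n+1}(t)=\sum_{D\subset\rnn}t^{2\binom{n}{2}-2\dinv(D)}=t^{2\binom{n}{2}}\sum_{D\subset\rnn}(t^{-2})^{\dinv(D)}.
\end{equation}
Now apply the Garsia--Haglund formula $C_n(q,t)=\sum_{D\subset\rnn}q^{\dinv(D)}t^{\binom{n}{2}-|D|}$ with $q=t^{-2}$ and the second variable set to $1$: this yields $C_n(t^{-2},1)=\sum_{D\subset\rnn}(t^{-2})^{\dinv(D)}$, hence $P_{n,n+1}(t)=t^{2\binom{n}{2}}C_n(t^{-2},1)$.

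Finally, to obtain the clean form $\sum_{D\subset\rnn}t^{2|D|}$, I would invoke the $q\leftrightarrow t$ symmetry of $C_n(q,t)$ (due to Garsia--Haiman), which gives $C_n(t^{-2},1)=C_n(1,t^{-2})=\sum_{D\subset\rnn}(t^{-2})^{\binom{n}{2}-\area(D)}$ from the stated specialization $C_n(1,t)=\sum_{D\subset\rnn}t^{\binom{n}{2}-\area(D)}$. Therefore $t^{2\binom{n}{2}}C_n(t^{-2},1)=\sum_{D\subset\rnn}t^{2\area(D)}=\sum_{D\subset\rnn}t^{2|D|}$, as claimed. The main subtlety to be careful about is the bookkeeping of exponents and the matching of normalizations ($|D|=\area(D)=|\Delta\setminus\gnn|$ versus $|\ZZ\setminus\Delta|$, and which specialization of $C_n$ is being used); the genuinely nontrivial inputs — the Garsia--Haglund combinatorial formula and the $q,t$-symmetry of $C_n(q,t)$ — are quoted from the literature, so no real obstacle remains beyond assembling these pieces correctly.
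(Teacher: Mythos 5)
Your proposal is correct and follows essentially the same route as the paper: Piontkowski's cell decomposition plus Theorem \ref{piontcell} to express the Poincar\'e polynomial via the $\dinv$ statistic, then the Garsia--Haglund formula, the $q,t$-symmetry of $C_n(q,t)$, and the area specialization $C_n(1,t)$. The paper phrases this as the one-line chain $\sum_{\Delta}q^{\binom{n}{2}-\dim\Delta}=C_n(q,1)=C_n(1,q)=\sum_{D}q^{\binom{n}{2}-|D|}$ and leaves the final exponent substitution implicit, which you carry out explicitly; the content is the same.
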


\begin{proof}
By Theorem \ref{pi}  Poincar\'e polynomial is equal to
$\sum_{\Delta\in \Mod_{\gnn}}q^{2\cdot \dim(\Delta)}.$ On the other hand,
$$\sum_{\Delta\in \Mod_{\gnn}}q^{\binom{n}{2}-\dim(\Delta)}=C_{n}(q,1)=C_{n}(1,q)=\sum_{D\subset \rnn}q^{\binom{n}{2}-|D|}.\qedhere$$
\end{proof}

It is natural to conjecture that the direct generalization of Lemma \ref{arean} is true for arbitrary coprime $(p,q)$. Set again $\Gamma=\gpq,$\ $R=R^{p,q},$ and $R_+=\rpq.$
One can associate another Young diagram to a semi-module $\Delta\in \Mod_{\gpq}:$

\begin{definition}\label{D'}
Let $D'(\Delta)$ be the diagram with columns
$$g(a_j)=|[a_j, a_j + q)\setminus \Delta|,$$
where  $0=a_0<a_1<\ldots<a_{p-1}$ is the $p$-basis of the $\Gamma$--semi-module $\Delta$.
\end{definition}

\begin{remark}
Note that $g(a_j)=|[a_j, a_j + q)\setminus \Delta|=|(a_j, a_j +
q]\setminus \Delta|,$ because both $a_j$ and $a_j+q$ are elements of
$\Delta.$ We will be using both formulas.
\end{remark}

By Definition \ref{def of dim}, we have $\dim\Delta=|D'(\Delta)|$. We illustrate
the Definition \ref{D'} in Figure \ref{5times7_2}.

\begin{figure}[ht]
\begin{center}
\begin{tikzpicture}
\filldraw [color=gray, line width=1]
(0,0)--(0,5)--(1,5)--(1,1)--(3,1)--(3,0)--(0,0);
\draw (0,0) grid (5,7);

\draw (0.5,-0.5) node {$0$};
\draw (1.5,-0.5) node {$7$};
\draw (2.5,-0.5) node {$8$};
\draw (3.5,-0.5) node {$11$};
\draw (4.5,-0.5) node {$14$};

\draw [dashed] (0,7)--(5,0);
\end{tikzpicture}
\caption{The gray boxes form the diagram $D'(\Delta),$ where $\Delta$
is from Figure \ref{5times7}. Recall that the $5$-basis is
$\{0,7,8,11,14\}.$ One can check that $g(0)=5,\ g(7)=g(8)=1,$ and
$g(11)=g(14)=0.$ Therefore, $\dim\Delta=7.$}\label{5times7_2}
\end{center}
\end{figure}




\begin{lemma}\label{D' subset R+}
For any $\Gamma$-semi-module $\Delta$ the Young diagram $D'(\Delta)$ can be inscribed in $R_+.$
\end{lemma}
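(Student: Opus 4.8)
The plan is to show that the $j$-th column of $D'(\Delta)$ has height at most $p-1-j$, which is exactly the height of the $j$-th column of $R_+$ (recall that $R_+$ consists of the boxes of the $p\times q$ rectangle $R$ strictly below the anti-diagonal, so reading columns from left to right its column heights are $q-1, q-1, \dots$ — wait, here I should be careful about which side is ``$p$'' and which is ``$q$''). I would first fix the convention: $D'(\Delta)$ has $p$ columns (indexed $j=0,\dots,p-1$ by the $p$-basis $a_0<a_1<\dots<a_{p-1}$), and the $j$-th column has height $g(a_j)=|[a_j,a_j+q)\setminus\Delta|$. So it suffices to prove two things: (i) $g(a_j)\le q-1$ for every $j$, and (ii) the heights are weakly decreasing in $j$, i.e.\ $g(a_0)\ge g(a_1)\ge\dots\ge g(a_{p-1})$, and moreover they decrease fast enough to fit under the diagonal, i.e.\ $g(a_j)\le$ (height of the $j$-th column of $R_+$).

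For (i): since $a_j\in\Delta$ and $a_j+q\in\Delta$ (as $q\in\Gamma$), the interval $[a_j,a_j+q)$ has $q$ integers, at least one of which, namely $a_j$, lies in $\Delta$; hence $g(a_j)\le q-1$. For the monotonicity and the diagonal bound, the key observation is a relation between consecutive $p$-generators. Because the $a_j$ are the minimal elements of $\Delta$ in their residue classes mod $p$, and $\Delta+\Gamma\subset\Delta$, one has $a_j+p\in\Delta$ and more importantly $a_j + p$ need not be a generator but it is in $\Delta$; I would use that for each residue class the next generator is controlled. Concretely, I would argue: if $x\in[a_j,a_j+q)\setminus\Delta$ then $x+q\notin$ ... no — rather, I would compare the count $g(a_j)$ with $g(a_{j+1})$ by using the bijection from the earlier lemma (the number of $q$-cogenerators $\ge a$ equals $g(a)$), so $g$ is a weakly decreasing function of its argument, giving $g(a_0)\ge g(a_1)\ge\dots\ge g(a_{p-1})$ immediately since $a_0<a_1<\dots<a_{p-1}$. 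That settles monotonicity for free.

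The remaining and main point is the diagonal bound: the $j$-th column (of height $g(a_j)$) must fit inside $R_+$, whose $j$-th column has a specific height determined by the line of slope matching $p/q$. Equivalently, I must show that the box at position $(j, g(a_j)-1)$ (if nonempty) lies strictly below the anti-diagonal of $R$, i.e.\ its label under $f(x,y)=pq-qx-py$ is positive, or directly that $g(a_j)\le q - \lceil jq/p\rceil$ or the analogous clean inequality. To get this I would use $a_j \ge$ (the smallest element of $\Gamma$ in the $j$-th relevant residue class), together with $g(a_j) = |[a_j,a_j+q)\setminus\Delta|\le |[a_j,a_j+q)\setminus\Gamma|$ — since $\Gamma\subset\Delta$ — and then count $|[a_j,a_j+q)\setminus\Gamma|$ explicitly using the structure of the semigroup $\Gamma^{p,q}$ (its complement in $[0,pq)$ has exactly $\delta=(p-1)(q-1)/2$ elements, distributed one per box of $R_+$). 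The cleanest route is probably to identify, via the labeling $f$, the top box of the $j$-th column of $D'(\Delta)$ with an actual box of $R$ and check its label is positive; this is where I expect the real work to be, matching Piontkowski's column-height data $g(a_j)$ against the combinatorial geometry of $R_+$. I would expect this to reduce, after choosing coordinates carefully, to the inequality $qa_j \pmod{pq}$-type estimate, or more simply to: the largest element of $[a_j, a_j+q)\setminus\Delta$ has label (under $f$, suitably normalized) lying in $\ZZ\setminus\Gamma$, hence corresponds to a box of $R_+$ in column $j$, forcing $g(a_j)$ to be at most that column's height in $R_+$. The main obstacle is bookkeeping the normalization of labels and verifying the top box of each column genuinely lands in $R_+$ rather than merely in $R$.
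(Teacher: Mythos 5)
You have the right target inequality (the $k$th column of $R_+$ has height $q-\left\lceil\frac{kq}{p}\right\rceil$, so one must show $g(a_{k-1})\le q-\left\lceil\frac{kq}{p}\right\rceil$), and the two easy parts of your plan are fine: $g(a_j)\le q-1$ because $a_j\in\Delta$, and the columns are weakly decreasing because $g(a)$ equals the number of $q$-cogenerators $\ge a$, hence is a weakly decreasing function of $a$. But the route you propose for the main inequality has a genuine gap. The estimate $g(a_j)=|[a_j,a_j+q)\setminus\Delta|\le|[a_j,a_j+q)\setminus\Gamma|$ discards exactly the information that makes the lemma true, namely that all $j+1$ generators $a_0<\dots<a_j$ lie in $\Delta$ and are $\le a_j$, so each of them injects extra elements of $\Delta$ (congruent to it mod $p$) into the interval. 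Concretely, take $(p,q)=(5,7)$ and $\Delta=\Gamma\cup(1+\Gamma)$, so that $a_0=0$, $a_1=1$. The needed bound for the second column is $g(a_1)\le 7-\left\lceil 14/5\right\rceil=4$ (and indeed $g(a_1)=3$), but your bound gives only $|[1,8)\setminus\Gamma|=|\{1,2,3,4,6\}|=5$, which exceeds $4$. In general $|[a_j,a_j+q)\setminus\Gamma|$ does not shrink as $j$ grows when the $a_j$ are small, whereas the column heights of $R_+$ do. Your alternative sketch (locating the top box of a column of $D'(\Delta)$ via the labeling $f$) also does not go through as stated, since $f$ is adapted to $D(\Delta)$, not to $D'(\Delta)$.

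The paper's proof is precisely the refined count you are missing. For each generator $a_m$ with $m\le k-1$ there are at least $\left\lfloor q/p\right\rfloor$ elements of $\Delta$ in $(a_{k-1},a_{k-1}+q]$ congruent to $a_m$ mod $p$; summing this over all $k$ generators is not sharp enough, so the generators are grouped into chains $B_i$ under the relation $a_l-a_m\equiv q\pmod p$, which upgrades the count for a chain of length $|B_i|$ to $\left\lfloor |B_i|q/p\right\rfloor$ (Lemma \ref{numbers}); finally, the element $a^i_{k_i}+q\in\Delta$ at the end of each chain contributes one more element in a fresh residue class, turning each floor into a ceiling, and superadditivity of the ceiling gives $\sum_i\left\lceil |B_i|q/p\right\rceil\ge\left\lceil kq/p\right\rceil$. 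That chain decomposition and the ``$+1$ per chain'' improvement are the essential ideas absent from your proposal.
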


\begin{proof}
The $k$th column of $R_+$ has height $q-\left\lceil\frac{kq}{p}\right\rceil.$ Therefore we need to prove the inequalities $g(a_{k-1})\le q-\left\lceil\frac{kq}{p}\right\rceil,$\ $k=1,\dots p,$ where $a_0<a_1<\dots<a_{p-1}$ is the $p$-basis of $\Delta.$

Consider the interval $(a_{k-1},a_{k-1}+q].$ For every $p$-generator $a_m$ such that $m\le k-1$ there are at least $\left\lfloor\frac{q}{p}\right\rfloor$ integers congruent to $a_m$ on this interval. Since $a_m\le a_{k-1}$ and $a_m\in \Delta$ one gets the following estimate:
$$
g(a_{k-1})\le q-k\left\lfloor\frac{q}{p}\right\rfloor.
$$
However, this is clearly not good enough for us. One can improve this estimate in the following way. Let $a_m$ and $a_l$ be $p$-generators of $\Delta$ such that $l,m\le k-1$ and $a_l-a_m\equiv q$ modulo $p.$ Then there are as many numbers congruent modulo $p$ to one of the generators $a_m$ or $a_l$ in the interval $(a_{k-1},a_{k-1}+q]$, as there are integers congruent to $a_m$ in the interval $(a_{k-1},a_{k-1}+2q].$ Indeed, $x\equiv a_l$ modulo $p$ iff $x+q\equiv a_m$ modulo $p.$  

Therefore, we get at least $\left\lfloor\frac{2q}{p}\right\rfloor$ integers in the interval $(a_{k-1},a_{k-1}+q],$ congruent to $a_m$ or $a_l$ modulo $p,$ which is a better estimate compare to $2\left\lfloor\frac{q}{p}\right\rfloor.$ More generally, one gets the following Lemma:

\begin{lemma}
\label{numbers}
Let $m<p$ be a positive integer. Then for any $A\in\mathbb Z$ there are at least $\left\lfloor\frac{mq}{p}\right\rfloor$ integers in the interval $I=(A,A+q],$ congruent to $0,$\ $q,$\ $\dots,$\ $(m-2)q,$ or $(m-1)q$ modulo $p.$
\end{lemma}
 
To apply Lemma \ref{numbers} in full strength, one needs to split the $p$-generators $a_0,\dots, a_{k-1}$ into groups as follows. Two generators $a_m$ and $a_l,\ m,l\le k-1,$ such that $a_l-a_m\equiv q$ modulo $p$ belongs to the same group. We extend this relation to an equivalence relation on the set $a_0,\dots, a_{k-1}.$ Let $B_1,\dots, B_n$ be the equivalence classes. For every $0\le i\le n$ the generators from the class $B_i$ can be put in the following order:
$$
B_i=\{a^i_1,\dots,a^i_{k_i}\},\ a^i_{j+1}-a^i_j\equiv q\ (\mbox{modulo}\ p)\ \mbox{for}\ 0\le j<k_i.
$$

\begin{remark}
Note that $g(a_{p-1})=0\le q-\left\lceil\frac{pq}{p}\right\rceil=0.$ Therefore, it is enough to consider $k<p,$ in which case the above ordering of elements of the classes $B_i$ is uniquely defined. 
\end{remark}

Therefore, applying Lemma \ref{numbers} one gets the following estimate:  
$$
g(a_{k-1})\le q-\sum\limits_{i=1}^n \left\lfloor\frac{|B_i|q}{p}\right\rfloor.
$$
Finally, one can further improve this estimate by $n$ in the following way. For every $1\le i\le n$ one gets $a^i_{k_i}+q\in \Delta.$ Note that $a^i_{k_i}+q>a_{k-1},$ because otherwise there would be a $p$-generator congruent to $a^i_{k_i}+q$ in $B_i,$ which would contradict with the way we ordered the elements of $B_i.$ Because of the same reason, $a^i_{k_i}+q$ is not congruent to any of the generators $a_0,\dots,a_{k-1}$ modulo $p.$ Therefore, one gets 
$$
g(a_{k-1})\le q-\left(\sum\limits_{i=1}^n \left\lfloor\frac{|B_i|q}{p}\right\rfloor\right)-n=q-\sum\limits_{i=1}^n \left(\left\lfloor\frac{|B_i|q}{p}\right\rfloor+1\right)=
$$ 
$$
=q-\sum\limits_{i=1}^n \left\lceil\frac{|B_i|q}{p}\right\rceil\le q-\left\lceil\frac{kq}{p}\right\rceil.
$$

\end{proof}

\begin{example}
We illustrate the proof of Lemma \ref{D' subset R+} on the following example. Let $p=5$ and $q=8.$ Suppose that $0,1,2\in \Delta.$ We immediately get $a_0=0,\ a_1=1,$ and $a_2=2.$ Let us estimate $g(a_2).$ Our goal is to show that 
$$
g(a_2)\le q-\left\lceil\frac{3q}{p}\right\rceil=8-5=3.
$$ 
We will actually show that in this case $g(a_2)\le 2.$

By definition, we need to show that there are at least $5$ elements of $\Delta$ in the interval $[3,10].$ The first observation is that each of the generators $0,1,$ and $2$ gives rise to at least $\left\lfloor\frac{8}{5}\right\rfloor=1$ integer in the interval, congruent to the corresponding generator modulo $5.$ However, this is not enough -- it only gives us $3$ elements of $\Delta$ in the interval.

Next we want to use Lemma \ref{numbers}. To do that we split the set $\{0,1,2\}$ in two subsets:
$$
B_1=\{2,0\},
$$
$$
B_2=\{1\}.
$$
Note that $0-2\equiv 8$ modulo $5.$ According to Lemma \ref{numbers}, in the interval $[3,10]$ we should have at least one integer congruent to $1,$ and at least $\left\lfloor\frac{2\times 8}{5}\right\rfloor=3$ integers congruent to $0$ or $2.$ Indeed, $6$ is congruent to $1,$ $5$ and $10$ are congruent to $0,$ and $7$ is congruent to $2.$

Finally, we have $0+8=8\in\Delta\cap [3,10]$ and $1+8=9\in\Delta\cap [3,10],$ and neither of them is congruent to $0,1,$ or $2.$ So we got $6$ integers $5,6,7,8,9,10\in\Delta\cap [3,10].$ Therefore $g(a_2)\le 8-6=2.$
\end{example}

We know that the map $D:\Delta \mapsto D(\Delta)$ provides a bijection between the $\Gamma$-semi-modules and the Young diagrams inscribed in $R_+.$ Therefore, one can consider the composition $G=D'\circ D^{-1}.$

\begin{conjecture}
The map $G$ is a bijection from the set of subdiagrams of $R_{+}$ to itself. In particular,
the Poincar\'e polynomial of the Jacobi factor is equal to
\begin{equation}
\label{poin}
P_{p,q}(t)=\sum_{D\subset R_{+}}t^{2\dim\Delta(D)}=\sum_{D\subset R_{+}}t^{2|G(D)|}=\sum_{D\subset R_{+}}t^{2|D|}
\end{equation}
\end{conjecture}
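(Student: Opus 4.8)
The conjecture asserts that $G = D' \circ D^{-1}$ is a bijection from subdiagrams of $R_+$ to itself, and in particular preserves the generating function by area (since $|D'(\Delta)| = \dim\Delta$ and one needs this to equal $\sum t^{2|D|}$ over all of $R_+$). Since $D$ is already known to be a bijection between semi-modules and subdiagrams of $R_+$, and $D'$ lands in $R_+$ by Lemma \ref{D' subset R+}, the only content is injectivity (equivalently surjectivity, since both sides are finite of the same cardinality) of $\Delta \mapsto D'(\Delta)$ on the set of $\Gamma$-semi-modules. So the plan is to reconstruct a $0$-normalized $\Gamma$-semi-module $\Delta$ from the column-heights $g(a_0), \dots, g(a_{p-1})$ of $D'(\Delta)$, and show the reconstruction is well-defined.

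First I would recall that a $0$-normalized $\Gamma$-semi-module is completely determined by its $p$-basis $0 = a_0 < a_1 < \cdots < a_{p-1}$, the residues $a_j \bmod p$ being a permutation of $\{0, 1, \dots, p-1\}$, subject to the semi-module condition which (because $q$ generates $\mathbb{Z}/p$) amounts to: if $a_j$ has residue $r$ then the generator with residue $r + q \bmod p$ is $\le a_j + q$, i.e. the sequence of generators is "$q$-increasing" along the cyclic order induced by adding $q$. The key identity to exploit is the Lemma preceding the Corollary: $g(a)$ equals the number of $q$-cogenerators $\ge a$, and dually each generator/cogenerator pair corresponds to a box of $R$. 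Concretely, $g(a_{k-1})$ counts the $q$-cogenerators exceeding $a_{k-1}$, and the differences $g(a_{k-1}) - g(a_k)$ (after sorting generators) record how many cogenerators lie in each gap. I would try to set up a combinatorial "inversion": given the multiset of column heights attached to columns in the order $a_0 < a_1 < \cdots$, greedily rebuild the staircase of generators and cogenerators simultaneously, using that both the $p$-generators and $q$-cogenerators are forced once we know, for each threshold, how many cogenerators survive.

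The cleanest route is probably to phrase everything in terms of the bijection with lattice paths / Dyck-type paths in $R$: a semi-module corresponds to a path, $D(\Delta)$ is "the diagram above the path" while $D'(\Delta)$ repackages the same path data by a different reading convention, and $G$ becomes an explicit combinatorial map on paths. Then I would try to identify $G$ with a known bijection — the excerpt itself remarks that in the $(n,n+1)$ case $G$ coincides with Haglund's $\zeta$ map (the $\textrm{dinv} \leftrightarrow \textrm{area}$ bijection), which is an involution-like bijection on Dyck paths; for general $(p,q)$ one would hope $G$ matches the "sweep"/$\zeta$-type map on $(p,q)$-Dyck paths. Proving $G$ is that map, plus invoking (or re-proving) that the sweep map is a bijection, would close the argument.

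The main obstacle, and the reason this is only conjectured, is precisely that for general coprime $(p,q)$ there is no easy direct inversion of $\Delta \mapsto D'(\Delta)$: the estimate in Lemma \ref{D' subset R+} is delicate and far from tight for individual columns, so recovering the $p$-basis from the column heights requires understanding the interaction between the "grouping into classes $B_i$" and the cogenerator count globally, not column by column. Equivalently, the bijectivity of the general $(p,q)$ sweep map — while now known — is itself a nontrivial theorem, and matching $G$ to it is not purely formal. I would expect that the honest proof either (a) establishes a bijective correspondence between both $D$ and $D'$ descriptions and the two standard statistics ($\textrm{dinv}_{p,q}$ and $\textrm{area}$) on rational Dyck paths and then quotes the rational sweep-map bijection, or (b) gives a direct but intricate recursive reconstruction of $\Delta$ from its column vector; in either case the hard step is the global combinatorial argument that the heights determine the basis, which is where all the number-theoretic subtlety of the $\lfloor mq/p \rfloor$ estimates concentrates.
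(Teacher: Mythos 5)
You have correctly diagnosed the logical status of this statement: it is a \emph{conjecture} in the paper, and the paper gives no proof of it for general coprime $(p,q)$ — only the special case $(p,q)=(n,n+1)$ is established, in Theorem \ref{nnp1}. Your reduction is exactly the right one and matches the paper's setup: since $D$ is a bijection between $0$-normalized $\Gamma$-semi-modules and subdiagrams of $R_+$, and $D'(\Delta)\subset R_+$ by Lemma \ref{D' subset R+}, the whole content is the injectivity of $\Delta\mapsto D'(\Delta)$, which for a self-map of a finite set is equivalent to bijectivity; the chain of equalities in (\ref{poin}) is then immediate from $\dim\Delta=|D'(\Delta)|$ and Theorem \ref{pi}. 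Your proposed strategy of reconstructing the $p$-basis from the column heights $g(a_0),\dots,g(a_{p-1})$ is precisely what the paper carries out in the $(n,n+1)$ case: Step 1 of the proof of Theorem \ref{nnp1} recovers the counts $m_k$ of generators in each window $[kn,(k+1)n)$ by the recursion $m_{k+1}=n-g(a_{m_0+\cdots+m_k})-m_0-\cdots-m_k$ (the ``bounce path''), and Steps 2--3 recover the residues of the generators modulo $n$ by a merging argument relying on Lemma \ref{freak}, which is special to $q=p+1$.

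Your assessment of why the general case is genuinely open within the paper's toolkit is also accurate: the merging step uses that consecutive residues differ by $1=q-p$, and the estimates of Lemma \ref{D' subset R+} only bound the columns of $D'(\Delta)$ rather than invert the map. Identifying $G$ with a sweep/$\zeta$-type map on rational Dyck paths and invoking its bijectivity is a legitimate route, but that bijectivity is itself a substantial theorem not available to (or used by) this paper; the authors instead remark only that $G$ coincides with Haglund's bijection in the $(n,n+1)$ case and defer further study to \cite{GM}. In short, your proposal does not prove the conjecture, but neither does the paper; what you do prove-in-outline for $(n,n+1)$ is essentially the paper's own argument.
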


The equation (\ref{poin}) agrees with the tables presented in \cite{piont}. In Theorem \ref{nnp1} we prove this conjecture for the case $(p,q)=(n,n+1).$ In this case the map $G$ should be compared with the bijection constructed by J. Haglund in \cite{haglund}. In the Appendix we illustrate the bijection $G$ for $p=3,\ q=4.$

\subsection{Bijectivity}

In the $(n,n+1)$ case we would like to present the explicit proof of the bijectivity of the map $G$.
This proof can be compared with the bijections constructed by Haglund and Loehr (\cite{hagl1},\cite{haglund},\cite{loehr})
to match the $\dinv,\area$ and $\bounce$ statistics.

As before, let $0=a_0<a_1<\dots<a_{n-1}$ be the $n$-basis of a semi-module $\Delta.$

\begin{lemma}
The number of $n$-generators of $\Delta$ in the interval $[a_{i}+n,a_{i+1}+n]$ equals $g(a_i)-g(a_{i+1})$.
\end{lemma}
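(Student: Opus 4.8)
### Proof proposal

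The plan is to count, in two ways, the $n$-generators of $\Delta$ lying in a carefully chosen interval, exploiting the link established earlier between the function $g$ and $q$-cogenerators (the Lemma giving ``the number of $q$-cogenerators $\ge a$ equals $g(a)$''). The key observation is that $a_{i+1}$ is the smallest element of $\Delta$ that is strictly larger than $a_i$ and congruent to $a_i+n$ modulo $n$ — wait, more precisely: the $n$-basis consists of the minimal elements in each residue class mod $n$, so the interval $[a_i+n, a_{i+1}+n]$ should be compared with the interval $[a_i, a_{i+1}]$ shifted by $n$. I would first record the elementary fact that $x\in\Delta$ and $x\equiv r\pmod n$ iff $x\ge a_r$, where $a_r$ denotes the $n$-generator in class $r$; this is immediate from Definition \ref{p-basis}.

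First I would translate the quantity $g(a_i)-g(a_{i+1})$ into something about the half-open intervals: since $g(a_i)=|[a_i,a_i+n+1)\setminus\Delta|$ for the pair $(n,n+1)$ (so $q=n+1$), and both $a_i,a_{i+1}$ are in $\Delta$, a short manipulation gives $g(a_i)-g(a_{i+1}) = |[a_i,a_{i+1})\setminus\Delta| - |[a_i+q,a_{i+1}+q)\setminus\Delta|$ — here I use that the interval $[a_i,a_i+q)$ and $[a_{i+1},a_{i+1}+q)$ overlap on $[a_{i+1},a_i+q)$, whose contributions cancel. Then I would argue that $[a_i,a_{i+1})\setminus\Delta$ and $[a_i+q,a_{i+1}+q)\setminus\Delta$ differ in a controlled way: an integer $x$ in $[a_i,a_{i+1})$ with $x\notin\Delta$ either has $x+q\notin\Delta$ (so it cancels against the second set, after the shift by $q$), or $x+q\in\Delta$, i.e. $x$ is a $q$-cogenerator. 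Conversely the elements $y\in[a_i+q,a_{i+1}+q)\setminus\Delta$ that are \emph{not} of the form $x+q$ with $x\in[a_i,a_{i+1})\setminus\Delta$ are exactly those $y$ with $y-q\in\Delta$ but $y\notin\Delta$; but since $q=n+1$, having $y\equiv y-q+1\pmod n$... I will need to sort out which of these two ``defect'' sets is in natural bijection with $n$-generators in $[a_i+n,a_{i+1}+n]$.

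The cleanest route, which I would pursue, is the following. An integer $b$ is an $n$-generator lying in $[a_i+n, a_{i+1}+n]$ exactly when $b\in\Delta$, $b-n\notin\Delta$, and $a_i+n\le b\le a_{i+1}+n$; the condition $b-n\notin\Delta$ together with $b\in\Delta$ is what makes $b$ a generator (smallest in its class), and I must check the range condition selects precisely $g(a_i)-g(a_{i+1})$ of them. I would set up the bijection $b\mapsto b-n$: this sends such $b$ to an element $c=b-n\in[a_i,a_{i+1}]$ with $c\notin\Delta$ but $c+n\in\Delta$. Since $q=n+1$, note $c+q = c+n+1$; I then relate ``$c\notin\Delta$, $c+n\in\Delta$'' to the cogenerator count. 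The point is that $c\notin\Delta$ forces (by $0$-normalization and $c<a_{i+1}$, with $c\equiv b\pmod n$ and $b$ minimal in its class being $\ge a_{i+1}+$something) the class of $c$ mod $n$ to be one whose generator exceeds $c$; counting such $c$ in $[a_i,a_{i+1}]$ and matching with the telescoping difference $g(a_i)-g(a_{i+1})$ finishes the proof.

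The main obstacle I anticipate is bookkeeping at the two endpoints $a_i$ and $a_{i+1}$ of the closed interval $[a_i+n,a_{i+1}+n]$: one must verify that $a_i+n$ and $a_{i+1}+n$ are included or excluded consistently with the half-open intervals defining $g$, and in particular that the endpoint $a_{i+1}+n$ — whose residue mod $n$ equals that of $a_{i+1}$, and which \emph{is} in $\Delta$ but is \emph{not} the generator of its class (the generator $a_{i+1}$ is smaller) — contributes correctly. A related subtlety is that among the $n$ residue classes, some generators fall below $a_i+n$ and should \emph{not} be counted; I will need the fact that $a_{i+1}<a_i+n$ is impossible in general, so that the interval $[a_i+n,a_{i+1}+n]$ genuinely has length $a_{i+1}-a_i\le$ enough room, and handle the inequalities $a_j - a_i$ for $j\le i$ versus $n$ carefully. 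Once the endpoint conventions are pinned down, the telescoping identity for $g$ and the class-minimality characterization of $n$-generators combine to give the count directly.
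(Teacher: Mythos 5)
Your reduction of the generator count to the set $\{c\in[a_i,a_{i+1}]:\ c\notin\Delta,\ c+n\in\Delta\}$ via $b\mapsto b-n$ is correct, and so is the telescoping identity for $g$; the problem is that the two halves of your argument use different shifts and you never actually join them. Telescoping $g$ with the shift $q=n+1$, as you do, identifies $g(a_i)-g(a_{i+1})$ with the number of $(n+1)$-cogenerators in $[a_i,a_{i+1})$ (and in the step you left unfinished, note that no $y\in[a_i+q,a_{i+1}+q)\setminus\Delta$ can have $y-q\in\Delta$, because $\Delta+q\subset\Delta$; there is only one ``defect'' set). But the generators you must count correspond to the set governed by the shift $n$, namely the $c\notin\Delta$ with $c+n\in\Delta$, and these are genuinely different sets: for $(p,q)=(3,4)$, $\Delta=\{0,3,4,6,7,\dots\}$ and $i=0$, the $4$-cogenerator in $[0,4)$ is $2$, while the relevant $c$ with $c+3\in\Delta$ is $1$. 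Your closing sentence (``matching with the telescoping difference \dots finishes the proof'') asserts precisely the equality of these two counts, which is the content of the lemma; the parenthetical about residue classes only restates that $c\notin\Delta$ iff $c$ is smaller than the generator of its class, and says nothing about the condition $c+n\in\Delta$.

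The fix is to telescope with the shift $n$ rather than $n+1$. Since $a_j$ and $a_j+n$ both lie in $\Delta$, one has $g(a_j)=|[a_j,a_j+n)\setminus\Delta|$, and cancelling the common part of $[a_i,a_i+n)$ and $[a_{i+1},a_{i+1}+n)$ gives
$$g(a_i)-g(a_{i+1})=|[a_i,a_{i+1}]\setminus\Delta|-|[a_i+n,a_{i+1}+n]\setminus\Delta|=|[a_i+n,a_{i+1}+n]\cap\Delta|-|[a_i,a_{i+1}]\cap\Delta|,$$
the last equality because the two closed intervals have the same length. The right-hand side is exactly the number of $n$-generators in $[a_i+n,a_{i+1}+n]$: an element $x$ of $[a_i+n,a_{i+1}+n]\cap\Delta$ fails to be a generator precisely when $x-n\in\Delta$, and $x\mapsto x-n$ puts these non-generators in bijection with $[a_i,a_{i+1}]\cap\Delta$. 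This is the paper's one-line argument; no cogenerators are needed.
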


\begin{proof}
Indeed, a number $x$ is an $n$-generator of $\Delta$ if and only if $x\in \Delta$ and $x-n\notin \Delta$, so
the number of $n$-generators equals 
$$|[a_{i}+n,a_{i+1}+n]\cap \Delta|-|[a_i,a_{i+1}]\cap \Delta]|=|[a_{i},a_{i}+n)\setminus \Delta|-|[a_{i+1},a_{i+1}+n)\setminus \Delta|=$$ $$g(a_i)-g(a_{i+1}).$$
\end{proof}

\begin{lemma}
In the $(n,n+1)$ case we have $g(a_i)=g(a_{i+1})$ iff $[a_{i},a_{i+1}]\subset \Delta$.
\end{lemma}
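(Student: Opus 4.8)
The plan is to prove both directions of the equivalence directly from the definition of $g$, using the specific structure of the $(n,n+1)$ semigroup, namely that $q = n+1 = p+1$. First I would record the elementary observation underlying everything: for $p = n$ and $q = n+1$, an interval of length $q$ starting at an integer $x$ contains exactly one element congruent to each residue class modulo $n$, plus one extra integer. More precisely, $[x, x+n+1)$ has $n+1$ elements, so each residue class mod $n$ is represented once, except the class of $x$, which is represented twice (by $x$ and $x+n$). Thus $g(a_i) = |[a_i, a_i+q)\setminus\Delta|$ counts how many residue classes mod $n$ are "missed" by $\Delta$ inside this window, with the class of $a_i$ itself contributing $0$ since $a_i\in\Delta$ and $a_i + n \in a_i + \gpq \subset \Delta$.

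For the easy direction, suppose $[a_i, a_{i+1}]\subset\Delta$. I would compare the two windows $[a_i, a_i+q)$ and $[a_{i+1}, a_{i+1}+q)$. Their symmetric difference consists of $[a_i, a_{i+1})$ (in the first but not the second) and $[a_i+q, a_{i+1}+q)$ (in the second but not the first). The first of these lies entirely in $\Delta$ by hypothesis, so it contributes nothing to $g(a_i)$. The second lies in $\Delta$ because $[a_i, a_{i+1}) \subset \Delta$ implies $[a_i+q, a_{i+1}+q) \subset \Delta + q \subset \Delta$. Hence both windows miss exactly the same set of integers, giving $g(a_i) = g(a_{i+1})$. (This direction in fact works for arbitrary coprime $(p,q)$; I would note that.)

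For the converse — the direction that genuinely uses $q = p+1$ — suppose $[a_i, a_{i+1}]\not\subset\Delta$, i.e. there is some $z$ with $a_i < z < a_{i+1}$ and $z\notin\Delta$. I want to produce a strict drop $g(a_i) > g(a_{i+1})$. The previous lemma already tells us $g(a_i) - g(a_{i+1})$ equals the number of $n$-generators in $[a_i+n, a_{i+1}+n]$, and this is always $\ge 0$; so it suffices to exhibit at least one $n$-generator in that interval, equivalently (by the cogenerator/generator bookkeeping already set up) to show the count is nonzero precisely when the interval $[a_i,a_{i+1}]$ is not fully inside $\Delta$. Concretely: if $z\notin\Delta$ with $a_i< z < a_{i+1}$, consider $z + q = z + n + 1$. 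I would argue that walking forward by steps of $n$ from $z$, the first element of $\Delta$ reached is an $n$-generator, and because $z < a_{i+1}$ while $a_{i+1}$ is the smallest element of $\Delta$ in its residue class $\ge a_{i+1}-n\cdot(\text{something})$... — more cleanly, since $q$ and $p$ differ by $1$, the residue of $z$ mod $n$ equals the residue of $z+q$ mod $n$ shifted by $1$, which lets me locate an $n$-generator of $\Delta$ landing in the window $[a_i+n, a_{i+1}+n]$. The main obstacle is exactly this: making the index-chasing with residues mod $n$ precise enough to pin the produced generator inside the correct interval, using $q = p+1$ crucially (for general $q$ the drop need not be strict). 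I expect the cleanest writeup routes through the $q$-cogenerator picture: $z\notin\Delta$ with $z+q\in\Delta$ would make $z$ a $q$-cogenerator, and the bijection between $q$-cogenerators $\ge a$ and the set counted by $g(a)$ (established earlier) converts "there is a missing element in $(a_i, a_{i+1})$" directly into "$g(a_i) > g(a_{i+1})$"; the $q = p+1$ hypothesis enters when checking that such a $z$, or a related cogenerator derived from $z$, indeed satisfies $z + q\in\Delta$ and lies strictly below $a_{i+1}$ but is counted by $g(a_i)$ and not by $g(a_{i+1})$.
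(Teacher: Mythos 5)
Your forward direction ($[a_i,a_{i+1}]\subset\Delta\Rightarrow g(a_i)=g(a_{i+1})$) is correct and complete: comparing the windows $[a_i,a_i+q)$ and $[a_{i+1},a_{i+1}+q)$ and observing that their symmetric difference is contained in $[a_i,a_{i+1})\cup\bigl([a_i,a_{i+1})+q\bigr)\subset\Delta$ is a clean argument, valid for arbitrary coprime $(p,q)$. The paper instead deduces this direction from the preceding lemma: no $n$-generator can lie in $[a_i+n,a_{i+1}+n]$, since any $x$ there has $x-n\in[a_i,a_{i+1}]\subset\Delta$. Both routes work.

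The converse, however, is not actually proved. You correctly reduce it to exhibiting one $n$-generator in $[a_i+n,a_{i+1}+n]$ (equivalently, one $q$-cogenerator in $[a_i,a_{i+1})$), but the construction you sketch --- start at an arbitrary gap $z\in(a_i,a_{i+1})$ and walk forward by steps of $n$ until you hit $\Delta$ --- does not control where the resulting generator lands: the walk may leave the window $[a_i+n,a_{i+1}+n]$ before reaching $\Delta$, and you yourself flag this as ``the main obstacle.'' The missing idea is to take the \emph{extremal} gap. Let $k$ be maximal with $[a_i,k]\subset\Delta$ and set $z=k+1\notin\Delta$, so $a_i\le k<a_{i+1}$. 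Then $z+n=k+(n+1)=k+q\in\Delta$ because $k\in\Delta$ and $q\in\Gamma$, while $z\notin\Delta$; hence $z+n$ is an $n$-generator lying in $(a_i+n,a_{i+1}+n]$, and the previous lemma yields $g(a_i)>g(a_{i+1})$. A single step of length $n$ suffices precisely because $q-1=n=p$, which is exactly where the $(n,n+1)$ hypothesis enters. (The paper runs the same argument contrapositively: if $g(a_i)=g(a_{i+1})$ there are no $n$-generators in the window, so $k+q$ is not one, forcing $k+1\in\Delta$ and contradicting the maximality of $k$.) If you prefer the cogenerator language you anticipated at the end, take instead the \emph{largest} gap $z$ in $(a_i,a_{i+1})$: then $z+1\in\Delta$, so $z+q=(z+1)+n\in\Delta$ and $z$ is a $q$-cogenerator in $[a_i,a_{i+1})$.
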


\begin{proof}
Let  $g(a_i)=g(a_{i+1})$. Let $k$ be the maximal integer such that $[a_i,k]\subset \Delta$ and suppose that $k<a_{i+1}$.
Since $g(a_i)-g(a_{i+1})$ equals  the number of $n$-generators in $[a_{i}+n,a_{i+1}+n]$, there are no such generators.
On the other hand, $k+n+1$ is an element of $\Delta$ in  $[a_{i}+n,a_{i+1}+n]$ which cannot be a $n$-generator,
so $k+1\in \Delta$. Contradiction.

Let   $[a_{i},a_{i+1}]\subset \Delta$, in this case there are no $n$-generators in $[a_{i}+n,a_{i+1}+n]$.
\end{proof}

\begin{lemma}
\label{sk}
In the $(n,n+1)$ case, let $s_k$ be the minimal $n$-generator of $\Delta$ in the interval $[kn,(k+1)n]$.
Then $[kn,s_k]\subset \Delta$.
\end{lemma}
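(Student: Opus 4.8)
The plan is to argue by contradiction, using only two elementary facts. First, since $\Delta$ is $0$-normalized we have $\Gamma = 0+\Gamma \subseteq \Delta+\Gamma \subseteq \Delta$; in particular every multiple of $n$ lies in $\Delta$, and so does $kn+1 = (k-1)n + (n+1)$ whenever $k\ge 1$. Second, the closure property has a useful contrapositive: if $x\notin\Delta$ then $x-n\notin\Delta$ and $x-(n+1)\notin\Delta$, because $n$ and $n+1$ lie in $\Gamma$. The case $k=0$ is immediate: $0$ is an $n$-generator of $\Delta$ lying in $[0,n]$ and it is the smallest integer in that interval, so $s_0=0$ and $[0,s_0]=\{0\}\subseteq\Delta$.

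For $k\ge 1$, I would suppose that $[kn,s_k]$ is not contained in $\Delta$. Since $kn\in\Gamma\subseteq\Delta$ and $s_k\in\Delta$, some integer of the open interval $(kn,s_k)$ is not in $\Delta$; let $y$ be the smallest element of $[kn,(k+1)n]$ not lying in $\Delta$, so that $kn<y<s_k$. The key point is that $y\ne kn+1$, because $kn+1\in\Gamma\subseteq\Delta$; hence $y-1>kn$ and, by minimality of $y$, one has $y-1\in\Delta$. On the other hand, $y\notin\Delta$ together with $n+1\in\Gamma$ forces $y-(n+1)=(y-1)-n\notin\Delta$. Therefore $y-1\in\Delta\setminus(\Delta+n)$ is an $n$-generator of $\Delta$ that lies in $[kn,(k+1)n]$ and is strictly smaller than $s_k$, contradicting the minimality of $s_k$.

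The only genuine obstacle is locating this line of attack. The phenomenon being exploited is that a gap of $\Delta$ situated just above $kn$ forces an $n$-generator to appear immediately below it, whereas the inclusion $\Gamma\subseteq\Delta$ forbids such a gap at the very first slot $kn+1$. Once one observes that the bottom of the first gap is necessarily an $n$-generator, the contradiction is instantaneous; no numerical estimates are involved, and the earlier lemmas on the functions $g(a_i)$ are not needed for this particular statement.
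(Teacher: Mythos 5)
Your proof is correct and is essentially the paper's own argument read in contrapositive form: your first gap $y$ is the paper's $l_k+1$ (where $l_k$ is the maximal integer with $[kn,l_k]\subset\Delta$), and your deduction that $y-1$ is an $n$-generator below $s_k$ mirrors the paper's deduction that $l_k+1=(l_k-n)+(n+1)\in\Delta$. Both hinge on the same two facts, namely $n+1\in\Gamma$ and the definition of an $n$-generator, so there is no substantive difference.
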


\begin{proof}
Let $l_k$ be the maximal number such that $[kn,l_k]\subset \Delta$. Suppose that $l_k<s_k$, then $l_k$ is not a $n$-generator,
so $l_k-n\in \Delta$, so $l_k+1=(l_k-n)+(n+1)\in \Delta$. Therefore $l_k$ is not maximal. Contradiction.
\end{proof}

\begin{lemma}
\label{freak}
Let $\alpha,\beta$ be $n$-generators of $\Delta$, $\beta-\alpha\equiv 1 \left(\mod n\right)$.
Then $\beta\le \alpha+n+1$.
\end{lemma}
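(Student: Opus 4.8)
The plan is to work entirely inside a single congruence class modulo $n$ and exploit the very special arithmetic of the pair $(n,n+1)$, namely that adding $n+1$ (an element of $\Gamma$) and subtracting $n$ shifts the residue by exactly $+1$. Suppose $\alpha,\beta$ are $n$-generators with $\beta \equiv \alpha + 1 \pmod n$, and assume for contradiction that $\beta \ge \alpha + n + 2$. Since $\beta \equiv \alpha+1 \equiv \alpha + (n+1) \pmod n$ and $\beta > \alpha + n + 1$, we have $\beta = \alpha + n + 1 + kn$ for some integer $k \ge 1$. I would then try to produce an element of $\Delta$ in the same residue class as $\beta$ that is strictly smaller than $\beta$ but still $\ge \beta - n$, contradicting that $\beta$ is an $n$-generator (recall $\beta$ is the smallest element of $\Delta$ in its class, restricted to $[\beta - n, \infty)$ — more precisely $\beta - n \notin \Delta$).

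The key step is to run the semi-module closure backwards from $\alpha$. Because $\alpha \in \Delta$ and $n+1 \in \Gamma$, all of $\alpha, \alpha + (n+1), \alpha + 2(n+1), \dots$ lie in $\Delta$. Consider the element $x := \alpha + (k+1)(n+1)$. Then $x \in \Delta$, and computing its residue: $x \equiv \alpha + (k+1) \pmod n$. Meanwhile $\beta = \alpha + n+1 + kn \equiv \alpha + 1 \pmod n$, so I should instead track the element $\alpha + (n+1)$, which is in $\Delta$ and congruent to $\alpha + 1 \equiv \beta \pmod n$. Now $\alpha + (n+1)$ and $\beta$ are both in $\Delta$ and congruent mod $n$; since $\beta$ is the $n$-generator of its class (the smallest element of $\Delta$ in that class), we get $\beta \le \alpha + n + 1$, which is exactly the claim. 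So the real content is just: $\alpha \in \Delta \Rightarrow \alpha + (n+1) \in \Delta$, and $\alpha + (n+1)$ lies in $\beta$'s congruence class, hence is $\ge \beta$ — wait, that gives $\alpha + n + 1 \ge \beta$, done.

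So the proof is genuinely short: I would write that since $\alpha$ is an $n$-generator of $\Delta$, in particular $\alpha\in\Delta$; since $n+1\in\Gamma$ and $\Delta+\Gamma\subset\Delta$, we have $\alpha+n+1\in\Delta$; since $\beta-\alpha\equiv 1\pmod n$ we have $\alpha+n+1\equiv\alpha+1\equiv\beta\pmod n$, so $\alpha+n+1$ lies in the same congruence class modulo $n$ as $\beta$; and since $\beta$ is the \emph{smallest} element of $\Delta$ in its congruence class (Definition \ref{p-basis}, with $p=n$), we conclude $\beta\le\alpha+n+1$.

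The main obstacle is essentially cosmetic rather than mathematical: making sure the inequality goes the right way and that one correctly invokes the minimality defining the $n$-basis. One subtlety worth stating explicitly is that $\beta$ being an $n$-generator means $\beta\in\Delta\setminus(\Delta+n)$, i.e. $\beta$ is the minimal element of $\Delta$ in its residue class mod $n$; so any element of $\Delta$ congruent to $\beta$ mod $n$ — in particular $\alpha+n+1$ — is $\ge\beta$. That is the entire argument; no case analysis on the size of $\beta-\alpha$ is actually needed, and the hypothesis $\beta-\alpha\equiv 1\pmod n$ is used only to place $\alpha+n+1$ in $\beta$'s class. I would double-check that the convention for the $n$-basis (smallest in each class) is the one in Definition \ref{p-basis}, which it is, and conclude.
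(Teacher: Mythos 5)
Your proof is correct and is essentially the paper's own argument: the paper's one-line proof ("follows from the fact that $\beta$ and $\alpha+n+1$ have the same remainder modulo $n$") implicitly uses exactly your two observations, namely that $\alpha+n+1\in\Delta$ since $n+1\in\Gamma$, and that $\beta$, being an $n$-generator, is the minimal element of $\Delta$ in its residue class. The initial detour through a proof by contradiction is unnecessary, but you correctly discard it and land on the intended direct argument.
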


\begin{proof}
Follows from the fact that $\beta$ and $\alpha+n+1$ have the same remainder modulo $n$.
\end{proof}

\begin{theorem}
\label{nnp1}
The map $G$ is bijective in the $(n,n+1)$ case.
\end{theorem}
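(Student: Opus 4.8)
The plan is to show that the map $G = D' \circ D^{-1}$ is a bijection from subdiagrams of $R_+ = \rnn$ to itself by proving it is injective; since the source and target are finite sets of the same cardinality (both equal to the set of Young diagrams inscribed in $\rnn$), injectivity suffices. Equivalently, I will show that the map $\Delta \mapsto D'(\Delta)$ is injective on the set of $0$-normalized $\gnn$-semi-modules, i.e. that a semi-module $\Delta$ can be reconstructed from the column-length sequence $\bigl(g(a_0), g(a_1), \dots, g(a_{n-1})\bigr)$ of $D'(\Delta)$.

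The key idea is to reconstruct the $n$-basis $0 = a_0 < a_1 < \dots < a_{n-1}$ of $\Delta$ directly from the numbers $g(a_i)$, processing the generators in order of their residue-block structure modulo $n$. First I would use Lemma~\ref{freak}: if $\alpha, \beta$ are $n$-generators with $\beta - \alpha \equiv 1 \pmod n$, then $\beta \le \alpha + n + 1$, which in the $(n,n+1)$ case forces $\beta \in \{\alpha+1, \alpha+n+1\}$. Combined with Lemma~\ref{sk} (the interval from $kn$ up to the minimal generator $s_k$ in $[kn,(k+1)n]$ lies in $\Delta$) and the Lemma computing the number of $n$-generators in $[a_i + n, a_{i+1}+n]$ as $g(a_i) - g(a_{i+1})$, one sees that the gaps between consecutive $n$-generators are tightly controlled: $a_{i+1} - a_i$ is either $1$ (when $[a_i,a_{i+1}]\subset\Delta$, equivalently $g(a_i) = g(a_{i+1})$ by the second Lemma) or is determined by how many generators of intermediate residues intervene. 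The plan is then to reconstruct $\Delta$ inductively: knowing $a_0, \dots, a_i$ and the sequence $g(a_0) \ge \dots$, the value $g(a_i) - g(a_{i+1})$ tells us how many generators lie in $[a_i+n, a_{i+1}+n]$; using Lemma~\ref{freak} and the fact that every residue class mod $n$ is represented exactly once among $a_0,\dots,a_{n-1}$, this pins down $a_{i+1}$ uniquely. Once all the $a_i$ are recovered, $\Delta = \bigcup_i (a_i + \gnn)$ is determined.

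Concretely, I would organize the generators by the following observation: the $n$-generators split into chains under the relation "$\beta - \alpha \equiv 1 \pmod n$ and $\beta, \alpha$ both generators," and along each such chain consecutive elements differ by either $1$ or $n+1$ (Lemma~\ref{freak}); moreover $g$ is constant along a maximal run where the difference is $1$ and drops precisely at the jumps of size $n+1$. So the multiset $\{g(a_i)\}$ together with the combinatorial constraint that the residues $0,1,\dots,n-1$ each occur exactly once allows a greedy reconstruction: one determines, for each residue class, the position in the sorted order of generators, hence the actual integer value. I would then verify that the reconstructed $\Delta$ indeed has $D'(\Delta)$ equal to the prescribed diagram, closing the loop.

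The main obstacle I anticipate is the bookkeeping in the inductive step: showing that the count $g(a_i) - g(a_{i+1})$ of intermediate generators, plus the residue constraints, genuinely determines $a_{i+1}$ and not merely narrows it to a few candidates — in particular ruling out the ambiguity between a gap of size $1$ and a gap of size $n+1$ at each stage. I expect this is exactly where Lemma~\ref{sk} and the "$g(a_i)=g(a_{i+1})$ iff $[a_i,a_{i+1}]\subset\Delta$" Lemma do the crucial work: the former anchors the reconstruction at each multiple of $n$, and the latter converts the numerical data $g(a_i) = g(a_{i+1})$ into the structural statement that forces $a_{i+1} = a_i + 1$. Assembling these into a clean induction, and checking the base case $a_0 = 0$ with $g(a_0) = \dim\Delta$ consistent with the first column of $R_+$ having height $n - \lceil (n+1)/n \rceil \cdot 0$... — correctly, height $n$ — is the part that needs care but should go through.
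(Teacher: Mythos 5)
Your high-level strategy coincides with the paper's: since $D$ is a bijection onto the diagrams inscribed in $\rnn$ and $D'$ lands in the same finite set (Lemma \ref{D' subset R+}), it suffices to reconstruct $\Delta$ from the column lengths $g(a_0),\dots,g(a_{n-1})$ of $D'(\Delta)$. However, the two load-bearing steps of your reconstruction do not hold as stated. First, Lemma \ref{freak} gives only the upper bound $\beta\le\alpha+n+1$; combined with $\beta\equiv\alpha+1\ (\mathrm{mod}\ n)$ this leaves $\beta\in\{\alpha+n+1,\ \alpha+1,\ \alpha+1-n,\ \alpha+1-2n,\dots\}$, not $\beta\in\{\alpha+1,\alpha+n+1\}$: the generator of residue $r+1$ may lie far \emph{below} the generator of residue $r$. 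For instance, for $n=3$ and $\Delta=\{0,2,3,4,5,\dots\}$ the generators are $0,2,4$ with residues $0,2,1$, so the residue-$2$ generator ($\beta=2$) precedes the residue-$1$ generator ($\alpha=4$), violating your claim. Consequently the ``chains with steps $1$ or $n+1$, along which $g$ is constant until a jump'' do not exist in general, and the greedy structure built on them collapses.

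Second, the inductive step is circular: $g(a_i)-g(a_{i+1})$ counts generators in $[a_i+n,a_{i+1}+n]$, i.e.\ generators of index larger than $i+1$ whose values are not yet known at that stage, so it cannot by itself pin down $a_{i+1}$ from $a_0,\dots,a_i$. This is exactly the obstacle you flag, and Lemmas \ref{sk} and the ``$g(a_i)=g(a_{i+1})$ iff $[a_i,a_{i+1}]\subset\Delta$'' lemma do not remove it. The paper avoids a left-to-right greedy altogether: it first recovers, by induction via Lemma \ref{sk}, the number $m_k$ of generators in each window $[kn,(k+1)n)$ (the bounce path); then uses the identity that $n-g(a_j)$ equals the number of generators below $a_j+n$ to recover the \emph{relative} order of residues of generators lying in two adjacent windows; and finally merges these partial orders into the full residue order via an intermediate-generator argument --- which is where Lemma \ref{freak} actually enters. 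That merging step is the heart of the proof and is missing from your proposal.
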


\begin{remark}
The map $G$ coincides with the bijection from \cite{hagl1},\cite{haglund} transforming a pair of statistics $(\area,\dinv)$ into 
$(\bounce,\area)$ statistics. We further explore this connection in our next paper \cite{GM}.
\end{remark}

\begin{proof}
We will describe the method of reconstruction of  the diagram $D$ (or, equivalently,
the $n$-generators $a_i$) from the diagram $G(\Delta)$.
One can compare this algorithm with the construction in Theorem 3.15 in \cite{haglund}.

Remark that $$n-g(a_j)=|[a_j, a_j + n)\cap \Delta|$$

{\bf Step 1.} For all $k$ the number $m_{k}$ of generators $a_i$ in $[kn,(k+1)n)$ is determined by $G(\Delta)$.

Let us prove it by induction by $k$. First, $m_0=n-g(0)$.
Suppose that we already know $m_0,\ldots, m_k$. Then by Lemma \ref{sk} we have $$[(k+1)n,(k+1)n+s_{k+1}]\subset \Delta$$
and $s_{k+1}=a_{m_0+\ldots+m_k}.$ Therefore $|[(k+1)n,(k+2)n)\cap \Delta|=n-g(s_{k+1})$, and
$$m_{k+1}=n-g(s_{k+1})-m_0-\ldots-m_k=n-g(a_{m_0+\ldots+m_k})-m_0-\ldots-m_k.$$
Note that this step corresponds to the construction of the "bounce path" in \cite{haglund}.

{\bf Step 2.} For all $k$ the order of remainders modulo $n$ of generators $a_i$ in $[kn,(k+2)n[$ is determined by $G(\Delta)$.

Suppose that $a_j\in [kn,(k+1)n), a_l\in [(k+1)n,(k+2)n)$. Then $n-g(a_j)$ equals  the number of generators  less than $a_j+n$,
therefore $a_l<a_j+n$ iff $l<n-g(a_j)$.

{\bf Step 3.} Given $G(\Delta)$, the set of generators $a_i$ can be reconstructed.

By the previous step, we can sort the generators in $[0,2n]$ and in $[n,3n]$ with respect to their remainders modulo $n$.
We want to merge these sequences of generators: we would like to understand how the generators from $[2n,3n]$ can be fitted in the sequence of generators from $[0,2n]$.

Suppose that we have two generators $\alpha\in [0,n]$, and $\beta\in [2n,3n].$ One can show that the remainder of $\beta$ is bigger than the remainder of $\alpha$ iff
there exists a generator $\gamma\in [n,2n],$ whose remainder is greater than the remainder of
$\alpha,$ and less than the remainder of $\beta$. Indeed, consider the generator $\delta$ congruent to $\alpha+1$ modulo $n$. By Lemma \ref{freak}, $\delta<2n.$ If $\delta>n,$ we are done. If $\delta<n,$ consider the generator congruent to $\alpha+2$ and repeat the argument.

Therefore, we can merge the sequences in a unique way. By repeating this procedure inductively,
we can reconstruct the remainders of all $a_i$ modulo $n$.

\end{proof}

\section{Subvariety of the Hilbert scheme of points}\label{Mpqh}

\begin{definition} Let $V_{p,q}$ denote the vector subspace of $\mathbb{C}[x, y]$ spanned by the monomials $x^{i}y^j$ such that $pi+qj<(p-1)(q-1)$.
Let $\mathcal{M}_{p,q,h}$ be the subset of the Hilbert scheme of points $\Hilb^{h}(\mathbb{C}^2)$
parametrising ideals $I\subset \mathbb{C}[x, y]$ such that $I + V_{p,q} = \mathbb{C}[x, y]$.
\end{definition}

The family of varieties $\mathcal{M}_{p,q,h}$ for $(p,q)=(n,kn+1)$ was considered by A. Buryak  (Theorem 1.5 in \cite{buryak}). By the construction, for all $h$ the varieties $\mathcal{M}_{p,q,h}$ are smooth subvarieties of $\Hilb^{h}(\mathbb{C}^2)$.

\begin{theorem}
The virtual Poincar\'e polynomial of the variety $\mathcal{M}_{p,q,h}$ is given by the formula (compare with Theorem \ref{piontcell})
$$P_{t}(\mathcal{M}_{p,q,h})=\sum_{D\subset R_{+}, |D|=h}t^{2(h+\hpq(D))}$$
\end{theorem}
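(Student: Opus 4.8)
The plan is to reduce the computation of $P_t(\mathcal{M}_{p,q,h})$ to the combinatorics already developed for Piontkowski's cells. First I would recall that a point of $\mathcal{M}_{p,q,h}$ is an ideal $I\subset\mathbb{C}[x,y]$ of colength $h$ with $I+V_{p,q}=\mathbb{C}[x,y]$; since $\dim V_{p,q}=\delta=(p-1)(q-1)/2$ (the monomials $x^iy^j$ with $pi+qj<(p-1)(q-1)$ are exactly those whose exponent vectors correspond to the boxes of $R_+$), the condition forces $h\le\delta$, and the quotient $\mathbb{C}[x,y]/I$ is spanned by a subset of size $h$ of these monomials. The natural stratification is by the monomial ideal to which $I$ degenerates under the $\mathbb{C}^*$-action (or equivalently by the standard monomial basis of $\mathbb{C}[x,y]/I$), which is a Young diagram $D\subset R_+$ with $|D|=h$. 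So $\mathcal{M}_{p,q,h}=\bigsqcup_{D\subset R_+,\,|D|=h}\mathcal{M}_D$, where $\mathcal{M}_D$ is the locus of ideals with that leading-term diagram.

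The heart of the argument is to show each stratum $\mathcal{M}_D$ is an affine cell of dimension $h+\hpq(D)$. For the dimension I would argue as in the standard analysis of Hilbert-scheme cells: an ideal $I$ with leading diagram $D$ is determined by expressing each generator of the monomial ideal $I_D$ modulo lower monomials, and the free parameters are indexed by certain arm–leg inequalities. Concretely, the tangent space to $\mathrm{Hilb}^h(\mathbb{C}^2)$ at $I_D$ decomposes into arm–leg pieces of the two types appearing in Definition~\ref{defhplus}: boxes $c\in D$ contributing when $\frac{a(c)}{l(c)+1}\le\frac{p}{q}<\frac{a(c)+1}{l(c)}$, together with the complementary boxes. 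The constraint $I+V_{p,q}=\mathbb{C}[x,y]$ is a transversality/openness condition cutting out precisely the smooth locus where the surviving parameters are the $h$ coordinates coming from the boxes of $R_+\setminus D$ forced into the basis, plus the $\hpq(D)$ arm–leg parameters; this gives $\dim\mathcal{M}_D=h+\hpq(D)$ and shows $\mathcal{M}_D\cong\mathbb{C}^{h+\hpq(D)}$. Because the strata are affine cells, the virtual Poincaré polynomial is simply the sum of $t^{2\dim\mathcal{M}_D}$ over all $D\subset R_+$ with $|D|=h$, which is the claimed formula.

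I would then cross-check consistency with Theorem~\ref{piontcell}: summing over all $h$, the total space $\mathcal{M}_{p,q}=\bigcup_h\mathcal{M}_{p,q,h}$ has virtual Poincaré polynomial $\sum_{D\subset R_+}t^{2(|D|+\hpq(D))}$, which should be compared with $\sum_{\Delta}t^{2\dim C_\Delta}=\sum_{D\subset R_+}t^{(p-1)(q-1)-2\hpq(D)}$ for the Jacobi factor; the substitution $D\mapsto$ (complement-type behaviour of) $\hpq$ explains the appearance of $\hpq$ with the opposite sign, matching the heuristic that $\mathcal{M}_{p,q,h}$ and the Jacobi factor are "opposite" cell decompositions of spaces glued along $R_+$. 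The main obstacle is the second step: one must verify carefully that the condition $I+V_{p,q}=\mathbb{C}[x,y]$ selects exactly an open cell inside each Hilbert-scheme stratum $\mathcal{M}_D$ and that the resulting cell is isomorphic to affine space of the stated dimension — i.e.\ that the naive parameter count survives after imposing the transversality condition. This can be handled following Buryak's argument in \cite{buryak} for $(p,q)=(n,kn+1)$, extended to general coprime $(p,q)$; once one knows the strata are affine cells the Poincaré polynomial computation is immediate.
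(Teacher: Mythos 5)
Your overall strategy coincides with the paper's: degenerate each ideal to a monomial ideal via a torus action, identify the resulting strata with Young diagrams $D\subset R_{+}$ of area $h$, and sum $t^{2\dim}$ over the cells. However, the one step that carries the actual content of the theorem --- that the cell attached to $D$ has dimension $h+\hpq(D)$ --- is not proved in your proposal, and the heuristic you offer for it is wrong in detail: you attribute ``$h$ coordinates'' to the boxes of $R_{+}\setminus D$, whereas the $h$ in the exponent is $|D|$ itself. The paper's count runs as follows. By the Ellingsrud--Str\o mme/Nakajima formula, the tangent space to $\Hilb^{h}(\mathbb{C}^2)$ at the monomial ideal $I_D$ carries, for each box $s\in D$, the two weights $t_1^{l(s)+1}t_2^{-a(s)}$ and $t_1^{-l(s)}t_2^{a(s)+1}$. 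Restricting the one-parameter $(p,q)$-subgroup and applying Bia\l ynicki-Birula, the dimension of the cell through $I_D$ is the number of tangent weights of the appropriate sign, namely
$$\bigl|\{s\in D:\ a(s)/(l(s)+1)<p/q\}\bigr|+\bigl|\{s\in D:\ (a(s)+1)/l(s)>p/q\}\bigr|.$$
Every box of $D$ lies in at least one of these two sets (since $\frac{a}{l+1}<\frac{a+1}{l}$ always), their intersection is exactly the set counted by $\hpq(D)$ (coprimality of $p,q$ rules out the boundary equalities), and inclusion--exclusion gives $|D|+\hpq(D)=h+\hpq(D)$. Without this weight computation your ``transversality/openness'' paragraph is an assertion rather than an argument, and deferring to Buryak only covers $(p,q)=(n,kn+1)$, which is precisely the special case the theorem generalizes.

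A smaller point you pass over: to know that the Bia\l ynicki-Birula decomposition of $\Hilb^{h}(\mathbb{C}^2)$ restricts to a cell decomposition of $\mathcal{M}_{p,q,h}$, one must check that $\lim_{t\to 0}t\cdot I$ remains in $\mathcal{M}_{p,q,h}$ for every $I\in\mathcal{M}_{p,q,h}$ (the paper states this explicitly); this is what guarantees that the strata cover $\mathcal{M}_{p,q,h}$ and that the fixed points occurring are exactly the monomial ideals whose diagrams lie in $R_{+}$ and have area $h$. Your identification of the monomials spanning $V_{p,q}$ with the boxes of $R_{+}$, and hence of the fixed-point set, is correct.
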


\begin{proof}
Consider the action of $T=(\mathbb{C}^{*})^2$ on $\Hilb^{h}(\mathbb{C}^2)$. Fixed points of this action correspond to
monomial ideals in $\mathbb{C}[x, y]$. Let $I\subset C[x, y]$ be a monomial
ideal of colength $h$. Let $D_I = \{(i, j)\in \ZZ^2|x^{i}y^{j}\not \in I\}$ be the
corresponding Young diagram. Let $R(T) = \mathbb{Z}[t_1, t_2]$ be the representation ring of $T$. Then the
weight decomposition of $T_{I}\Hilb^{h}(\mathbb{C}^2)$ is given by the following formula (\cite{nakajima},\cite{elstro}):
\begin{equation}
T_{I}\Hilb^{h}(\mathbb{C}^2)=\sum_{c\in D}(t_1^{l(s)+1}t_2^{-a(s)}+t_1^{-l(s)}t_2^{a(s)+1}).
\end{equation}

Consider the action of the one-parameter $(p,q)$-subgroup of $T$ on $\mathcal{M}_{p,q,h}$. Such an action defines a
cell decomposition of $\mathcal{M}_{p,q,h}$ by unstable varieties of its fixed points (\cite{bibi},\cite{bibi2}), since for every $I\in \mathcal{M}_{p,q,h}$
the limit $\lim_{t\to 0}t\cdot I$ belongs to $\mathcal{M}_{p,q,h}$.

The fixed points
correspond to the Young diagrams contained in $R_{+}$ of area $h$, and the unstable subspace at a point $I$ has dimension
$$\dim_{D}=~\vline \left\{s\in D_{I}:\frac{a(s)}{l(s)+1}<\frac{p}{q}\right\}\vline~+~\vline \left\{s\in D_I :\frac{a(s)+1}{l(s)} > \frac{p}{q}\right\}\vline=|D_I|+\hpq(D).$$

Remark that since $p$ and $q$ are coprime, we can never get the equality of one of the above fractions to $\frac{p}{q}$.

As a conclusion, the variety $\mathcal{M}_{p,q,h}$ admits a cell decomposition where cells are labelled by diagrams $D\subset R_{+}$
with $|D|=h$ and the dimension of the cell corresponding to $\Delta$ is $h+\hpq(D)$.
\end{proof}

\section{Remarks}
\label{sec:knots}

Our work was partially inspired by the emerging development of the algebraic and geometric models for the 
Khovanov-Rozansky homology \cite{KR} of torus knots. Given a plane curve singularity, one can consider its intersection with a small
3-sphere, which is a link in $S^3$. If a singularity has one Puiseux pair $(p,q)$, then its link is isotopic to the $(p,q)$ torus knot.

Recently, two approaches to understanding the conjectural structure of the HOMFLY homology of torus knots were proposed. In \cite{qtcat}
the generators in HOMFLY homology of a $(p,q)$ torus knot were suggested to be enumerated by the lattice paths with marked corners in the $p\times q$
rectangle above the diagonal. For $q=p+1$ this construction counts the Schr\"oder paths in a square, and it was conjectured that the gradings of generators are given by certain combinatorial statistics introduced by J. Haglund, and that the bigraded Poincar\'e polynomial is related to the $q,t$-Catalan numbers of A. Garsia and M. Haiman. Of course, all these combinatorial constructions are finite, so we regard this theory as the reduced version of the HOMFLY homology.

Another approach was proposed in \cite{oshe} and developed in \cite{ORS}. It was conjectured that the unreduced HOMFLY homology of the link of a plane curve singularity $C$ corresponds to the homology of certain strata in the Hilbert scheme of points on $C$. It was also conjectured in \cite{ORS} that the reduced HOMFLY homology can be computed in terms of a certain filtration on the cohomology of the compactified Jacobian of $C$. 

The results of the present article suggest some similarity between the combinatorial structures in these approaches: the cells of the compactified Jacobian of $C$ 
are parametrized by the lattice paths, and Theorem \ref{piontcell} relates the dimension of a cell to the above combinatorial statistics.
We refer the reader to \cite{ORS} for more details.

\newpage
\section*{Appendix}

For $(p,q)=(3,4)$ we list all objects appearing in the text: $\Gamma^{3,4}$--semi-modules $\Delta$, 
the corresponding diagrams $D(\Delta)$, $3$-generators and $4$-cogenerators of $\Delta$, and the dual diagrams $D'(\Delta)$.

\begin{center}
\begin{figure}[ht]
\begin{tikzpicture}
\draw (0,0)--(0,13.5)--(12,13.5)--(12,0)--(0,0);
\draw (3.5,0)--(3.5,13.5);
\draw (5.3,0)--(5.3,13.5);
\draw (7.5,0)--(7.5,13.5);
\draw (10.3,0)--(10.3,13.5);
\draw (0,2.5)--(12,2.5);
\draw (0,5)--(12,5);
\draw (0,7.5)--(12,7.5);
\draw (0,10)--(12,10);
\draw (0,12.5)--(12,12.5);


\draw (2,13) node {\small $\Delta$};
\draw (4.5,13) node {\small $D(\Delta)$};
\draw (6.4,13) node {\small $3$-generators};
\draw (8.9,13) node {\small $4$-cogenerators};
\draw (11,13) node {\small $D'(\Delta)$};



\draw (1.7,11) node {\small $\{0,3,4,6,\ldots\}$};


\draw (3.6,10.4) -- (3.6,12.4) -- (5.1,12.4) -- (5.1,10.4) -- (3.6,10.4);
\draw (4.1,10.4) -- (4.1,12.4);
\draw (4.6,10.4) -- (4.6,12.4);
\draw (3.6,10.9) -- (5.1,10.9);
\draw (3.6,11.4) -- (5.1,11.4);
\draw (3.6,11.9) -- (5.1,11.9);


\draw (3.8,10.2) node {\small $8$};
\draw (3.8,10.7) node {\small $5$};
\draw (3.8,11.2) node {\small $2$};
\draw (3.8,11.7) node {\small $-1$};
\draw (3.8,12.2) node {\small $-4$};
\draw (4.3,10.2) node {\small $4$};
\draw (4.8,10.2) node {\small $0$};


\draw [line width=1.5] (3.6,12.4)--(3.6,10.4)--(5.1,10.4);


\draw (6.5,11) node {\small $(0,4,8)$};


\draw (8.8,11) node {\small $(-4,-1,2,5)$};


\fill [color=gray] (10.5,11)--(10.5,12)--(11.0,12)--(11.0,11.5)--(11.5,11.5)--(11.5,11)--(10.5,11);
\draw (10.5,11)--(10.5,12)--(11.0,12)--(11.0,11.5)--(11.5,11.5)--(11.5,11)--(10.5,11);
\draw (10.5,11.5)--(11.0,11.5)--(11.0,11.0);



\draw (1.7,8.5) node {\small $\{0,3,4,5,6,\ldots\}$};


\draw (3.6,7.9) -- (3.6,9.9) -- (5.1,9.9) -- (5.1,7.9) -- (3.6,7.9);
\draw (4.1,7.9) -- (4.1,9.9);
\draw (4.6,7.9) -- (4.6,9.9);
\draw (3.6,8.4) -- (5.1,8.4);
\draw (3.6,8.9) -- (5.1,8.9);
\draw (3.6,9.4) -- (5.1,9.4);


\draw (3.8,8.2) node {\small $5$};
\draw (3.8,8.7) node {\small $2$};
\draw (3.8,9.2) node {\small $-1$};
\draw (3.8,9.7) node {\small $-4$};
\draw (4.3,8.2) node {\small $1$};
\draw (4.3,7.7) node {\small $4$};
\draw (4.8,7.7) node {\small $0$};


\draw [line width=1.5] (3.6,9.9)--(3.6,8.4)--(4.1,8.4)--(4.1,7.9)--(5.1,7.9);


\draw (6.5,8.5) node {\small $(0,4,5)$};


\draw (8.8,8.5) node {\small $(-4,-1,1,2)$};


\fill [color=gray]  (10.5,8.5)--(10.5,9.5)--(11.0,9.5)--(11.0,8.5)--(10.5,8.5);
\draw  (10.5,8.5)--(10.5,9.5)--(11.0,9.5)--(11.0,8.5)--(10.5,8.5);
\draw (10.5,9)--(11.0,9);



\draw (1.7,6) node {\small $\{0,2,3,4,5,6,\ldots\}$};


\draw (3.6,5.4) -- (3.6,7.4) -- (5.1,7.4) -- (5.1,5.4) -- (3.6,5.4);
\draw (4.1,5.4) -- (4.1,7.4);
\draw (4.6,5.4) -- (4.6,7.4);
\draw (3.6,5.9) -- (5.1,5.9);
\draw (3.6,6.4) -- (5.1,6.4);
\draw (3.6,6.9) -- (5.1,6.9);


\draw (3.8,6.2) node {\small $2$};
\draw (3.8,6.7) node {\small $-1$};
\draw (3.8,7.2) node {\small $-4$};
\draw (4.3,5.2) node {\small $4$};
\draw (4.8,5.2) node {\small $0$};
\draw (4.3,5.7) node {\small $1$};
\draw (4.3,6.2) node {\small $-2$};


\draw [line width=1.5] (3.6,7.4)--(3.6,6.4)--(4.1,6.4)--(4.1,5.4)--(5.1,5.4);


\draw (6.5,6) node {\small $(0,2,4)$};


\draw (8.8,6) node {\small $(-4,-2,-1,1)$};


\fill [color=gray]  (10.5,6)--(10.5,6.5)--(11.0,6.5)--(11.0,6)--(10.5,6);
\draw (10.5,6)--(10.5,6.5)--(11.0,6.5)--(11.0,6)--(10.5,6);



\draw (1.7,3.5) node {\small $\{0,1,3,4,5,6,\ldots\}$};


\draw (3.6,2.9) -- (3.6,4.9) -- (5.1,4.9) -- (5.1,2.9) -- (3.6,2.9);
\draw (4.1,2.9) -- (4.1,4.9);
\draw (4.6,2.9) -- (4.6,4.9);
\draw (3.6,3.4) -- (5.1,3.4);
\draw (3.6,3.9) -- (5.1,3.9);
\draw (3.6,4.4) -- (5.1,4.4);


\draw (3.8,3.2) node {\small $5$};
\draw (4.3,3.2) node {\small $1$};
\draw (4.8,3.2) node {\small $-3$};
\draw (3.8,3.7) node {\small $2$};
\draw (3.8,4.2) node {\small $-1$};
\draw (3.8,4.7) node {\small $-4$};
\draw (4.8,2.7) node {\small $0$};


\draw [line width=1.5] (3.6,4.9)--(3.6,3.4)--(4.6,3.4)--(4.6,2.9)--(5.1,2.9);


\draw (6.5,3.5) node {\small $(0,1,5)$};


\draw (8.8,3.5) node {\small $(-4,-3,-1,2)$};


\fill [color=gray]  (10.5,3.5)--(10.5,4)--(11.5,4)--(11.5,3.5)--(10.5,3.5);
\draw  (10.5,3.5)--(10.5,4)--(11.5,4)--(11.5,3.5)--(10.5,3.5);
\draw (11.0,3.5)--(11.0,4);



\draw (1.7,1) node {\small $\{0,1,2,3,4,5,6,\ldots\}$};


\draw (3.6,0.4) -- (3.6,2.4) -- (5.1,2.4) -- (5.1,0.4) -- (3.6,0.4);
\draw (4.1,0.4) -- (4.1,2.4);
\draw (4.6,0.4) -- (4.6,2.4);
\draw (3.6,0.9) -- (5.1,0.9);
\draw (3.6,1.4) -- (5.1,1.4);
\draw (3.6,1.9) -- (5.1,1.9);


\draw (3.8,1.2) node {\small $2$};
\draw (4.3,0.7) node {\small $1$};
\draw (4.8,0.7) node {\small $-3$};
\draw (4.3,1.2) node {\small $-2$};
\draw (3.8,1.7) node {\small $-1$};
\draw (3.8,2.2) node {\small $-4$};
\draw (4.8,0.2) node {\small $0$};


\draw [line width=1.5] (3.6,2.4)--(3.6,1.4)--(4.1,1.4)--(4.1,0.9)--(4.6,0.9)--(4.6,0.4)--(5.1,0.4);


\draw (6.5,1) node {\small $(0,1,2)$};


\draw (8.9,1) node {\small $(-4,-3,-2,-1)$};


\draw (11,1) node {\Large $\emptyset$};

\end{tikzpicture}
\caption{Semi-modules for the semigroup generated by $3$ and $4.$}
\end{figure}
\end{center}

\section*{Acknowledgements}

The authors would like to thank A. Oblomkov, V. Shende, A. Buryak , S. Gusein-Zade, S. Grushevsky  and J. Rasmussen for useful discussions.
E. G. was partially supported by the grants RFBR-08-01-00110-a, RFBR-10-01-00678, NSh-8462.2010.1 and the Dynasty fellowship for young scientists.


\end{document}